\newcommand{\R}{{\mathbb{R}}}                
\newcommand{\N}{{\mathbb{N}}}                
\newcommand{\To}{\;{\longrightarrow}\;}      
\newcommand{{\X}}{\mathbb{X}}
\newcommand{{\Y}}{\mathbb{Y}}
\newcommand{{\Z}}{\mathbb{Z}}
\newcommand{{\W}}{\mathbb{W}}
\newcommand{\pa}[1]{\left(#1\right)}           
\newcommand{\set}[1]{\left\{#1\right\}}        
\newcommand{\abs}[1]{\left\vert#1\right\vert}
\newcommand{\norm}[1]{\left\Vert#1\right\Vert}
\newtheorem{thm}{Theorem}[section]
\newtheorem{lem}[thm]{Lemma}
\newtheorem{prop}[thm]{Proposition}
\newtheorem{defi}{Definition}[section]
\newtheorem{rem}{Remark}[section]
\theoremstyle{remark}
\newtheorem{example}{Example}[section]
\begin{document}

\title[Compact almost automorphic dynamics of non-autonomous differential equations with exponential dichotomy and applications to biological models with delay]{Compact almost automorphic dynamics of non-autonomous differential equations with exponential dichotomy and applications to biological models with delay}


\author*[1]{\fnm{Alan} \sur{Ch\'avez}}\email{ajchavez@unitru.edu.pe}

\author[2]{\fnm{Nelson} \sur{Aragon\'es}}\email{naragones@unitru.edu.pe}
\equalcont{These authors contributed equally to this work.}

\author[3]{\fnm{Manuel} \sur{Pinto}}\email{pintoj.uchile@gmail.com}
\equalcont{These authors contributed equally to this work.}

\author[4]{\fnm{Ulices} \sur{Zavaleta}}\email{auzavaleta@unitru.edu.pe}
\equalcont{These authors contributed equally to this work.}

\affil*[1]{\orgdiv{OASIS and GRACOCC research groups, Instituto de investigaci\'on en Matem\'aticas}, \orgname{Facultad de Ciencias F\'isicas y Matem\'aticas, Universidad Nacional de Trujillo}, \orgaddress{\street{Av. Juan Pablo II S/N}, \city{Trujillo}, \postcode{13011}, \state{La Libertad}, \country{Per\'u}}}

\affil[2,4]{\orgdiv{OASIS research group, Instituto de investigaci\'on en Matem\'aticas}, \orgname{Facultad de Ciencias F\'isicas y Matem\'aticas, Universidad Nacional de Trujillo}, \orgaddress{\street{Av. Juan Pablo II S/N}, \city{Trujillo}, \postcode{13011}, \state{La Libertad}, \country{Per\'u}}}

\affil[3]{\orgdiv{Departamento de Matem\a'aticas, Facultad de Ciencias}, \orgname{Universidad de Chile}, \orgaddress{\street{Las Palmeras 3425}, \city{Santiago}, \postcode{7800003}, \state{Regi\'on Metropolitana}, \country{Chile}}}


\abstract{
In the present work, we prove that, if $A(\cdot)$ is a compact almost automorphic matrix and the system
$$x'(t) = A(t)x(t)\, ,$$
possesses an exponential dichotomy with Green function $G(\cdot, \cdot)$, then its associated system
$$y'(t) = B(t)y(t)\, ,$$
where $B(\cdot) \in H(A)$ (the hull of $A(\cdot)$) also possesses an exponential dichotomy. Moreover, the
Green function $G(\cdot, \cdot)$ is compact Bi-almost automorphic in $\mathbb{R}^2$, this implies that $G(\cdot, \cdot)$ is  $\Delta_2$ - like uniformly continuous, where $\Delta_2$ is the principal diagonal of $\mathbb{R}^2$, an important ingredient in the proof of invariance of the compact almost automorphic function space under convolution product with kernel $G(\cdot, \cdot)$. Finally, we study the existence of a positive compact almost automorphic solution of non-autonomous differential equations of biological interest having non-linear harvesting terms and mixed delays.
}

\keywords{compact almost automorphic functions, Bi-almost automorphic functions, non-autonomous differential systems, exponential dichotomy.}


\pacs[MSC Classification]{34A30, 34A34, 34C27.}

\maketitle

\section{Introduction}\label{sec1}

Let us consider the non-autonomous differential equation
\begin{equation}
x'(t) =A(t)x(t) + f(t)\, ,\label{eq1}
\end{equation}
and suppose that the associate system
\begin{equation}\label{eq3}
x'(t)= A(t)x(t)
\end{equation}
possesses an exponential dichotomy, then the unique bounded solution to equation (\ref{eq1}) is
\begin{equation}\label{SolDeff}
x(t)=\int_{\mathbb{R}}G (t,s)f(s) ds\, ,
\end{equation}
where $G(\cdot, \cdot)$ is the Green function associated to (\ref{eq3}). If  $A(\cdot)$ is an almost
 automorphic matrix and $f$ is an almost automorphic function, then the solution $x$ is almost automorphic if $G(\cdot,\cdot)$ is
 Bi-almost automorphic. To prove that $G(\cdot,\cdot)$ is Bi-almost automorphic is not trivial. For example, in infinite dimension, in order to do that, it is important to add as hypothesis that the resolvent operator also be almost automorphic, moreover it is necessary a delicate analysis of the behaviour of the Yosida approximations of the evolution family $A(\cdot)$, see for instance  \cite{baroun2008almost,baroun2019almost,esSe2022compact,maniar2003almost}. In contrasts, in the finite dimensional case, A. Coronel and co-authors have reported in \cite{coronel2016almost} the notion of integrable Bi-almost automorphy and proved that $G(\cdot,\cdot)$ is integrable Bi-almost automorphy if a projection matrix $P$ commutes with the fundamental matrix solution of  (\ref{eq3}).

The Bi-almost periodicity/automorphy notion of a continuous function of two variables, goes back at least to the work of T. J. Xiao et. al. \cite{19}. This notion is very useful in the study of almost periodic/automorphic solutions of non-autonomous differential equations (see the works  \cite{baroun2008almost,baroun2019almost,esSe2022compact,pinto2017pseudo} and references cited therein) and also of integral equations with kernel $\Gamma(t,s)$ depending on two variables, see for instance \cite{chavez2021almostaut}.

The main purpose of the present work is to prove, in finite dimension, that if $A(\cdot)$ is a compact almost  automorphic matrix, and
%
%
%
%
the system (\ref{eq3}) possesses  an $(\alpha, K, P)$-exponential dichotomy (Definition \ref{Green}), then the system
\begin{equation}\label{eq3h}
y'(t)= B(t)y(t)\, ,
\end{equation}
possesses an $(\alpha, K, P_0)$-exponential dichotomy, where $B(\cdot)\in H(A)$ and $H(A)$ stands for the hull of $A(\cdot)$ (see comments after Definition \ref{KaaDDef}), this is the analogous result to the case of almost periodic systems, see \cite[Theorem 7.6]{fink1974almost}. We also prove, without the techniques and the machinary of the infinite dimensional case, that the associated Green function $G(\cdot,\cdot)$ is compact Bi-almost automorphic  (Theorem \ref{teo001} in section 3).
Notice that, since $G(\cdot, \cdot)$ is discontinuous in the principal diagonal of $\mathbb{R}^2$, thus continuous almost everywhere, then it is measurable and therefore $G(\cdot, \cdot)$ does not fall into the notion of Bi-almost periodicity/automorphy introduced in \cite{19} which is given for continuous functions. This motivates the introduction, in this work, of the notion of (compact) Bi-almost automorphy for measurable functions. Also, we are able to prove that, although $G(\cdot, \cdot)$ is not uniformly continuous (Proposition \ref{CharCAA}), it is \textbf{$\Delta_2$-like uniformly continuous} (Lemma \ref{almostUC}), where $\Delta_2 :=\{ (t,t)\in \mathbb{R}^2 \, : \, t \in \mathbb{R}\} \subset \mathbb{R}^2$ denotes the principal diagonal of $\mathbb{R}^2$.

Also we study the existence of a positive compact almost automorphic solution of the following  biological model with non-linear harvesting terms and mixed delays 
\begin{equation}\label{eq11-ps-1}
\dot{u}(t)=
-\alpha(t)u(t)+\sum_{i=1}^n\beta_i(t)f_i(\lambda_i(t)u(t-\tau_i(t))+b(t)H(u(t-\sigma(t)))\, .
\end{equation}

The present work is organized as follows: in section \ref{section2}, we introduce the notion of compact Bi-almost automorphy for measurable functions and revisit some preliminary facts on compact almost automorphic functions. In section \ref{section3} we prove the main result of the present work, thus Theorem \ref{teo001} and Lemma \ref{almostUC}. In section \ref{section4}, we use the $\Delta_2$-like uniform continuity of $G(\cdot, \cdot)$ to show invariance of the almost automorphic function space under convolutions whose kernel is the Green function $G(\cdot, \cdot)$. Finally, in section \ref{section5} we prove the existence of a unique positive compact almost automorphic solution of the delay differential equation (\ref{eq11-ps-1}). 


\section{Preliminaries}\label{section2}
In the present work, $\mathbb{X}$ is a real or complex Banach spaces with norm $||\cdot||_{\mathbb{X}}$
, $||\cdot||$ denotes the norm of matrices,  $||\cdot||_{\infty}$ denotes the supremum norm and $|\cdot|$ is the absolute value.
\begin{defi}\label{KaaDDef}
A continuous function $f:\R\to\X$, is compact almost
automorphic if, for every sequence $\set{s'_n}\subset \mathbb{R}$, there
exists a subsequence $\set{s_n}\subseteq \set{s'_n}$ and a function
$\tilde{f}: \R\to\X$ such that the following limits hold
\begin{equation}\label{Kaadef}
\lim_{n\to + \infty}\sup_{t\in \mathcal{C}}||f(t+s_n)-\tilde{f}(t)||_{\mathbb{X}}=0,\quad \lim_{n\to +
\infty}\sup_{t\in \mathcal{C}}||\tilde{f}(t-s_n)-f(t)||_{\mathbb{X}}=0\, .
\end{equation}
where $\mathcal{C}\subset \mathbb{R}$, is compact.
\end{defi}
If in (\ref{Kaadef}) the limits are pointwise in $\mathbb{R}$, $f$ is called almost automorphic.
We denote by $\mathcal{K}AA(\R, \X)$ the apace of compact almost automorphic functions and by $AA(\R,\X)$ the space of almost automorphic functions. 
$AA(\R,\X)$ and $\mathcal{K}AA(\R,\X)$ are Banach spaces under the supremum norm and $\mathcal{K}AA(\R,\X)\subset AA(\R,\X)$ holds, see \cite{CHAKPPINTO2023}.

For an almost automorphic function $f$, its Hull, denoted by $H(f)$, is the set of all functions $\tilde{f}$ that satisfy Definition \ref{KaaDDef}.


The following is a useful characterization of compact almost automorphic functions on the real line. For the multidimensional euclidean space, see \cite{CHAKPPINTO2023}.

\begin{prop}\label{CharCAA} A continuous function $f:\mathbb{R}\to \mathbb{X}$ is compact almost automorphic if and only if it is almost automorphic and uniformly continuous.
\end{prop}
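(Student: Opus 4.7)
The plan is to prove the two implications separately, with the equicontinuity supplied by uniform continuity as the bridge between pointwise convergence (almost automorphy) and uniform-on-compacts convergence (compact almost automorphy).

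For the direct implication ($\Rightarrow$), I first note that compact almost automorphy trivially implies almost automorphy, since uniform convergence on the compact set $\mathcal{C}=\{t\}$ coincides with pointwise convergence at $t$. To obtain uniform continuity, I argue by contradiction: if $f$ is not uniformly continuous, there exist $\varepsilon_{0}>0$ and sequences $(t_{n}),(s_{n})\subset\mathbb{R}$ with $\delta_{n}:=s_{n}-t_{n}\to 0$ but $\|f(t_{n})-f(s_{n})\|_{\mathbb{X}}\geq\varepsilon_{0}$. Applying the compact almost automorphic hypothesis to $\{t_{n}\}$, I extract a subsequence (still called $t_{n}$) and a function $\tilde{f}$ such that $f(\cdot+t_{n})\to\tilde{f}(\cdot)$ uniformly on every compact of $\mathbb{R}$. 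In particular, $\tilde{f}$ is continuous as a uniform-on-compacts limit of continuous functions. Evaluating at $0$ yields $f(t_{n})\to\tilde{f}(0)$, while uniform convergence on $[-1,1]$ together with $\delta_{n}\to 0$ and continuity of $\tilde{f}$ at $0$ gives $f(t_{n}+\delta_{n})\to\tilde{f}(0)$, contradicting $\|f(t_{n})-f(s_{n})\|_{\mathbb{X}}\geq\varepsilon_{0}$.

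For the converse ($\Leftarrow$), assume $f\in AA(\mathbb{R},\mathbb{X})$ is uniformly continuous, and let $\omega(\cdot)$ be a modulus of continuity of $f$. Given $\{s'_{n}\}\subset\mathbb{R}$, almost automorphy produces a subsequence $\{s_{n}\}$ and a function $\tilde{f}$ such that $f(t+s_{n})\to\tilde{f}(t)$ and $\tilde{f}(t-s_{n})\to f(t)$ pointwise in $t$. The family $\{f(\cdot+s_{n})\}_{n}$ inherits $\omega$ as a common modulus of continuity, hence is equicontinuous; consequently $\tilde{f}$, being a pointwise limit of this equicontinuous family, satisfies the same modulus $\omega$ and so is itself uniformly continuous. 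The standard equicontinuity lemma (pointwise convergence of an equicontinuous sequence on a dense set upgrades to uniform convergence on compact sets) therefore promotes the first pointwise limit in Definition \ref{KaaDDef} to uniform convergence on every compact $\mathcal{C}\subset\mathbb{R}$. The same argument applied to the equicontinuous family $\{\tilde{f}(\cdot-s_{n})\}_{n}$, with common modulus $\omega$, handles the second limit.

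I expect the main obstacle to be conceptual rather than technical: one must be careful that the hull function $\tilde{f}$ produced by almost automorphy is a priori only measurable, so the backward direction rests on recognising that uniform continuity of $f$ forces the equicontinuity of all its translates, which is precisely what gives regularity to $\tilde{f}$ and converts pointwise to uniform-on-compacts convergence. In the forward direction, by contrast, the continuity of $\tilde{f}$ is automatic from uniform convergence on compacts, and the whole argument reduces to a one-line application of the triangle inequality. No new machinery beyond a standard Ascoli-type equicontinuity lemma is required.
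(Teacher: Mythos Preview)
The paper does not supply a proof of this proposition; it merely states the characterization and refers the reader to \cite{CHAKPPINTO2023} for the multidimensional version. Your argument is correct and is in fact the standard one: the forward direction uses the compact almost automorphic hypothesis applied to the sequence $\{t_n\}$ (with $s_n-t_n\to 0$) together with continuity of the limit function $\tilde f$ at $0$, and the backward direction upgrades pointwise convergence to uniform convergence on compacts via equicontinuity of the translates $\{f(\cdot+s_n)\}$, which is inherited from the uniform continuity of $f$. Since there is no proof in the paper to compare against, nothing further needs to be said.
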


In the following proposition, we summarize some properties of compact almost automorphic function, for further details see for instance \cite{chavez2022multi,CHAKPPINTO2023} and the books \cite{diaganaBook2013almost,16}.

\begin{prop}\label{PropCAA}
Let $f,g \in \mathcal{K} AA (\mathbb{R}; \mathbb{X})$, then
\begin{enumerate}
\item If $\mathbb{X}$ be a Banach algebra with norm $||\cdot ||_{\mathbb{X}}$,  addition $+_{\mathbb{X}}$ and multiplication $\times_{\mathbb{X}}$, then $\mathcal{K} AA(\mathbb{R},\mathbb{X})$ is also a Banach algebra with norm $||\cdot ||_{\infty}$ and  the operations: if $f,g \in \mathcal{K} AA(\mathbb{R},\mathbb{X})$, then
$$ (f+ g)(t):=f(t) +_\mathbb{X} g(t)\, , \, \, \, t \in \mathbb{R}\, ,$$
$$ (f\cdot g)(t):=f(t) \times_\mathbb{X} g(t)\, , \, \, \, t \in \mathbb{R}\, .$$

\item $f$ is bounded  and, if $\tilde{f}$ is the limit function in definition \ref{KaaDDef}, then
$$||f||_{\infty}=||\tilde{f}||_{\infty}\, .$$

\item If $F: \mathbb{X} \to \mathbb{Y}$ be a continuous function, then $F\circ f:\mathbb{R} \to\mathbb{Y}$ is compact almost automorphic.

\item The range of $f$ is relatively compact.

\item If $a\in \mathcal{K} AA (\mathbb{R}; \mathbb{R}) $, then $F(t)=f(t-a(t))$ is compact almost automorphic from $\mathbb{R}$ to $\mathbb{X}$.

\end{enumerate}
\end{prop}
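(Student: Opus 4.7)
The plan is to use the characterization in Proposition \ref{CharCAA} --- compact almost automorphy equals almost automorphy plus uniform continuity --- so that each item reduces to combining preservation of these two features, with item (4) supplying the compactness of the range needed in item (3). I will sketch items (1)--(4) briefly and save item (5) for last as the only step with real friction.

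\emph{Items (1), (2), (3), (4).} For (1), a diagonal extraction yields a single subsequence $\{s_n\}$ along which $f(\cdot+s_n)\to\tilde f$ and $g(\cdot+s_n)\to\tilde g$ uniformly on every compact $\mathcal{C}$; the triangle inequality then gives
\[
\norm{(f\cdot g)(t+s_n)-\tilde f(t)\tilde g(t)}_{\mathbb{X}} \le \norm{f(t+s_n)}\,\norm{g(t+s_n)-\tilde g(t)}+\norm{\tilde g(t)}\,\norm{f(t+s_n)-\tilde f(t)}\, ,
\]
which tends to zero uniformly on $\mathcal{C}$; addition is trivial; uniform continuity of $f+g$ and $f\cdot g$ follows from boundedness (item (2)) and uniform continuity of each factor, and completeness of $\mathcal{K}AA(\R,\X)$ under $\norm{\cdot}_{\infty}$ is inherited from $AA(\R,\X)$ together with the fact that uniform limits of uniformly continuous functions are uniformly continuous. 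For (4), applying Definition \ref{KaaDDef} to an arbitrary $\{t_n\}\subset\R$ with $s_n'=t_n$ and reading the limit at $t=0$ yields a convergent subsequence of $\{f(t_n)\}$, proving the range relatively compact; boundedness and the norm identity $\norm{\tilde f}_{\infty}=\norm{f}_{\infty}$ in (2) then drop out by evaluating each of the two limits in (\ref{Kaadef}) pointwise. For (3), item (4) makes $K:=\overline{f(\R)}$ compact, so Heine--Cantor gives $F$ uniformly continuous on $K$; composing with the uniform continuity of $f$ on $\R$ yields uniform continuity of $F\circ f$, and the convergence $f(t+s_n)\to\tilde f(t)$ uniform on $\mathcal{C}$ transfers to $F(f(t+s_n))\to F(\tilde f(t))$ uniform on $\mathcal{C}$ by the same uniform continuity of $F$ on $K$.

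\emph{Item (5), the main obstacle.} Given $\{s_n'\}$, I would extract a common subsequence $\{s_n\}$ so that $f(\cdot+s_n)\to\tilde f$ and $a(\cdot+s_n)\to\tilde a$ both converge uniformly on compact sets. Since $a$ is bounded by item (2), for any compact $\mathcal{C}\subset\R$ the enlarged set $\mathcal{C}':=\mathcal{C}+[-\norm{a}_{\infty}-\norm{\tilde a}_{\infty},\,\norm{a}_{\infty}+\norm{\tilde a}_{\infty}]$ is compact and contains every $t-\tilde a(t)$ for $t\in\mathcal{C}$. Writing
\[
\norm{f(t+s_n-a(t+s_n))-\tilde f(t-\tilde a(t))} \le \norm{f((t-\tilde a(t))+s_n)-\tilde f(t-\tilde a(t))}+\omega_f\!\pa{|\tilde a(t)-a(t+s_n)|}\, ,
\]
where $\omega_f$ is the modulus of uniform continuity of $f$ supplied by Proposition \ref{CharCAA}, both terms tend to zero uniformly on $\mathcal{C}$: the first by uniform convergence of $f(\cdot+s_n)\to\tilde f$ on $\mathcal{C}'$, the second by uniform convergence $a(\cdot+s_n)\to\tilde a$ on $\mathcal{C}$. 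The second limit in (\ref{Kaadef}) is handled symmetrically, and uniform continuity of $F(t)=f(t-a(t))$ follows directly from uniform continuity of $f$ and $a$, concluding compact almost automorphy via Proposition \ref{CharCAA}.

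The crux throughout is item (5): the coupling of the two subsequences forces one to exploit precisely the uniform continuity afforded by the compact almost automorphic class to absorb the fluctuation of $a(\cdot+s_n)$ inside the argument of $f$ --- a step that would fail for merely almost automorphic $f$, since then $f(\cdot+s_n)\to\tilde f$ would be only pointwise.
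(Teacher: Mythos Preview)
Your proposal is correct. The paper does not supply its own proof of this proposition: it is stated as a summary of known properties with the remark ``for further details see for instance \cite{chavez2022multi,CHAKPPINTO2023} and the books \cite{diaganaBook2013almost,16}'', so there is no in-paper argument to compare against.

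That said, your route through Proposition~\ref{CharCAA} is exactly the natural one and matches how these facts are typically established in the cited literature. One small point worth making explicit in item~(5): the ``symmetric'' second limit requires the uniform continuity of the limit function $\tilde f$, not just of $f$. This is available because uniform-on-compacts convergence of the uniformly continuous translates $f(\cdot+s_n)$ forces $\tilde f$ to inherit the same modulus of continuity, but you should say so rather than leave it implicit. Everything else is clean.
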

Next, we provide the definition of Bi-almost automorphy for measurable functions.

\begin{defi}\label{CompBiAAMeasu}
Let $f:\R\times\R \to\X$  be a measurable function, $f$ is compact Bi-almost automorphic if, for every sequence $\set{s'_n}_{n\in\N}\subset\R$ there exist a subsequence $\set{s_n}\subseteq \set{s'_n}$  and a function $\tilde{f}: \R\times\R \to\X$  such that, if $\mathbb{K} \subset \R\times\R$ is compact, the following limits hold
\begin{align*}
&\lim_{n\to + \infty}\sup_{(t,s)\in \mathbb{K}}||f(t+s_n, s+s_n)- \tilde{f}(t,s)||_{\mathbb{X}}=0\, , \\
& \lim_{n\to + \infty}\sup_{(t,s)\in \mathbb{K}}||\tilde{f}(t-s_n, s-s_n)-f(t,s)||_{\mathbb{X}}=0\, .
\end{align*}
When the limits are pointwise in $\R\times\R$, $f$ is called Bi-almost automorphic.
\end{defi}

\begin{defi}\label{Green}
Let $\Phi(\cdot )$ be a fundamental matrix of system  (\ref{eq3}), then (\ref{eq3}) has an exponential dichotomy with parameters $(\alpha, K, P)$, if there exist positive constants $\alpha, K$ and
a projection $P$ ($P^2=P$) such that $||G(t,s)||\leq
Ke^{-\alpha\abs{t-s}}$,\; $t,s\in\R$. Where $G(\cdot , \cdot)$ is the Green function defined by:
$$
G(t,s):=
\begin{cases}
\Phi(t)P\Phi^{-1}(s),\quad &\text{$t\geq s$\,, }\\
-\Phi(t)(I-P)\Phi^{-1}(s),\quad &\text{$t < s$}\, .
\end{cases}
$$
In this case, we say that system (\ref{eq3}) has an $(\alpha, K, P)$-exponential dichotomy.
\end{defi}
\begin{defi}\cite{chavez2021almostaut}\label{defBaa}
We say that a measurable function $C:\R\times\R  \to \X$ is $\lambda$-bounded if,  there exists a positive function $\lambda:\R\times\R\to \R$ such that, for every $\tau \in \R$ we have
$$||C(t+\tau,s+\tau )||_{\mathbb{X}} \leq \lambda(t,s)\, .$$
\end{defi}

\begin{example}If $G(\cdot,\cdot)$ is the Green function defined in Definition \ref{Green}; then, $G(\cdot,\cdot)$ is measurable and $\lambda$-bounded, for $\lambda(t,s)=Ke^{-\alpha|t-s|}$ .
\end{example}

\begin{lem}\label{lemBou}
Let us suppose that the Bi-almost automorphic function $C:\R\times\R \to \mathbb{X}$  is $\lambda$-bounded.
Then, its limit function $\tilde{C}:\R\times\R \to \mathbb{X}$ (see definition \ref{defBaa}) satisfies:
$$||\tilde{C}(t+\tau,s+\tau )||_{\mathbb{X}} \leq \lambda(t,s)\,\, , \, \, \forall \tau \in \mathbb{R}\, .$$
\end{lem}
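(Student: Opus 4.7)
The plan is to show that the bound on $C$ is preserved under pointwise limits along the extracted subsequences appearing in Definition \ref{CompBiAAMeasu}. The key observation is that the $\lambda$-boundedness controls $C$ after any translation by a real number, and in particular by $\tau+s_n$ for the translation numbers $s_n$ produced by the Bi-almost automorphic property.

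More concretely, fix an arbitrary $\tau\in\mathbb{R}$ and an arbitrary point $(t,s)\in\mathbb{R}\times\mathbb{R}$. Starting from any sequence $\{s'_n\}\subset\mathbb{R}$, the Bi-almost automorphy of $C$ provides a subsequence $\{s_n\}\subseteq\{s'_n\}$ such that, evaluated at the point $(t+\tau,s+\tau)$, one has the pointwise convergence
\begin{equation*}
C(t+\tau+s_n,\, s+\tau+s_n)\;\longrightarrow\;\tilde{C}(t+\tau,\, s+\tau)\quad\text{as } n\to+\infty.
\end{equation*}
On the other hand, applying Definition \ref{defBaa} with the translation parameter $\tau+s_n\in\mathbb{R}$ at the base point $(t,s)$ yields, for every $n\in\mathbb{N}$,
\begin{equation*}
\|C(t+\tau+s_n,\, s+\tau+s_n)\|_{\mathbb{X}}\;\leq\;\lambda(t,s).
\end{equation*}

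Passing to the limit $n\to+\infty$ in this inequality and using the continuity of the norm together with the convergence above gives $\|\tilde{C}(t+\tau,s+\tau)\|_{\mathbb{X}}\leq\lambda(t,s)$. Since $\tau\in\mathbb{R}$ and $(t,s)\in\mathbb{R}\times\mathbb{R}$ were arbitrary, this is exactly the desired conclusion. No genuine obstacle is expected here; the only point requiring a little care is to make sure that the subsequence $\{s_n\}$ delivered by Definition \ref{CompBiAAMeasu} is the one associated with the limit function $\tilde{C}$ evaluated at the shifted point $(t+\tau,s+\tau)$, so that the pointwise convergence one plugs into the inequality is legitimate. Once this is granted, the argument is essentially a one-line limit of a translation-invariant bound.
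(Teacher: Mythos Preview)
Your proof is correct and follows essentially the same approach as the paper: both use the $\lambda$-bound $\|C(t+\tau+s_n,s+\tau+s_n)\|_{\mathbb X}\le\lambda(t,s)$ (the paper writes it via a triangle inequality) and pass to the limit along the defining subsequence $\{s_n\}$. Your cautionary remark about $\{s_n\}$ being the subsequence associated to the given $\tilde C$ is exactly the right point; with that in place the argument is complete.
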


\begin{proof}
Let $\{s_n'\}\subset \mathbb{R}$ be arbitrary. Since $C$ is Bi-almost automorphic, there exist a subsequence $\{s_n\} \subset \{s_n'\}$ and a function $\tilde{C}$ such that the following pointwise limits hold
%
%
$$\tilde C(t,s ):=\lim_{n\to +\infty}C(t+s_n,s+s_n )\, \, ,\, \, \,  C(t,s)=\lim_{n\to +\infty}\tilde C(t-s_n,s-s_n )\, .$$ 
On the other hand, we have
$$||\tilde{C}(t+\tau,s+\tau )||_{\mathbb{X}}\leq ||\tilde{C}(t+\tau,s+\tau )-C(t+\tau+s_n,s+\tau+s_n )||_{\mathbb{X}}+ \lambda(t,s)\,.$$
Now, taking the limit as $n\to +\infty$ in the last inequality 
we obtain the result.
\end{proof}
This Lemma improves \cite[Lemma 2.9]{chavez2021almostaut}, where it was proved that $\tilde{C}(\cdot, \cdot)$ satisfies the inequality $||\tilde{C}(t ,s  )||_{\mathbb{X}} \leq \lambda(t,s)\, \, ,  \, \, (t,s)\in \mathbb{R}\times \mathbb{R}$. This Lemma will be invoked in section \ref{section4}.  Further details on almost automorphic functions can be found in \cite{diaganaBook2013almost,16} and on exponential dichotomy in \cite{06,hartmanBook2002ordinary}.
\section{Bi-compact almost automorphy of the Green function $G(\cdot , \cdot)$}\label{section3}
Our main result in this section is the following Theorem:
\begin{thm}\label{teo001}
 Suppose that system (\ref{eq3}) has an $(\alpha, K, P)$-exponential  dichotomy with fundamental matrix $\Phi(t)$,
$\Phi(0)=I$ and $A(\cdot)$ is compact almost automorphic, that is:
Given and arbitrary sequence $\set{s'_n}\subset \R$ there exists a
subsequence $\set{s_n}\subset \set{s'_n}$ such that the following
limits hold uniformly on compacts sets of the real line
\begin{equation}\label{eq4}
\lim_{n\to +\infty} A(t+s_n) = :B(t),\quad \lim_{n\to
+\infty}B(t-s_n)= A(t)\, .
\end{equation}
Then, there exists a projection matrix $P_0$, such that the system
\begin{equation}\label{eq5}
y'(t)=B(t)y(t)\, ,
\end{equation}
has an  $(\alpha, K, P_0)$-exponential  dichotomy. Furthermore, the associated Green function of system (\ref{eq3}) is compact Bi-almost automorphic.
\end{thm}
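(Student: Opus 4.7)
The plan is to construct the fundamental matrix and dichotomy projection of system (\ref{eq5}) as limits of translates from (\ref{eq3}), and then to read off the compact Bi-almost automorphy of the associated Green function by applying the same construction in reverse. The main obstacle will be the discontinuity of $G(\cdot,\cdot)$ across the diagonal $\Delta_2$, together with the identification, in the backward limit, of the resulting projection with the original $P$.

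First, I set $\Phi_n(t):=\Phi(t+s_n)\Phi^{-1}(s_n)$, so $\Phi_n$ is the fundamental matrix of $x'(t)=A(t+s_n)x(t)$ normalized by $\Phi_n(0)=I$. Compact almost automorphy of $A$ gives $\|A\|_\infty<\infty$, and combining $A(\cdot+s_n)\to B(\cdot)$ uniformly on compacts with a Gronwall argument applied to the integral equation satisfied by $\Phi_n-\tilde\Phi$ (where $\tilde\Phi$ denotes the fundamental matrix of (\ref{eq5}) with $\tilde\Phi(0)=I$) yields $\Phi_n\to\tilde\Phi$ and $\Phi_n^{-1}\to\tilde\Phi^{-1}$ uniformly on compacts. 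Setting $P_n:=\Phi(s_n)P\Phi^{-1}(s_n)$, a direct computation gives $G(t+s_n,s+s_n)=\Phi_n(t)P_n\Phi_n^{-1}(s)$ for $t\geq s$ and $-\Phi_n(t)(I-P_n)\Phi_n^{-1}(s)$ for $t<s$; in particular $\|P_n\|=\|G(s_n,s_n)\|\leq K$. By Bolzano--Weierstrass, a relabeled subsequence gives $P_n\to P_0$, and $P_0^2=P_0$ on passing to the limit, so $P_0$ is a projection.

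Define $\tilde G$ from $\tilde\Phi$ and $P_0$ as in Definition \ref{Green}. On a compact $\mathbb K\subset\R^2$, write $\mathbb K=\mathbb K^+\cup\mathbb K^-$ with $\mathbb K^+=\mathbb K\cap\{(t,s):t\geq s\}$ and $\mathbb K^-=\mathbb K\cap\{(t,s):t<s\}$; on the closed piece $\mathbb K^+$ both $G(\cdot+s_n,\cdot+s_n)$ and $\tilde G$ coincide with the single product $\Phi_n(t)P_n\Phi_n^{-1}(s)$, respectively $\tilde\Phi(t)P_0\tilde\Phi^{-1}(s)$, whose factors converge uniformly on compacts, giving uniform convergence there; the analogous argument applies on $\mathbb K^-$, so the convergence is uniform on all of $\mathbb K$. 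Passing to the limit in $\|G(t+s_n,s+s_n)\|\leq Ke^{-\alpha|t-s|}$ then yields $\|\tilde G(t,s)\|\leq Ke^{-\alpha|t-s|}$, establishing the $(\alpha,K,P_0)$-exponential dichotomy of (\ref{eq5}).

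For the second limit in Definition \ref{CompBiAAMeasu}, I apply the same forward construction now to system (\ref{eq5}) with the sequence $\{-s_n\}$: $B\in H(A)$ is itself compact almost automorphic, (\ref{eq5}) has the just-established dichotomy, and the hypothesis gives $B(t-s_n)\to A(t)$ uniformly on compacts, so the construction produces $\tilde G(t-s_n,s-s_n)\to\hat G(t,s)$ uniformly on compacts, where $\hat G$ is the Green function of (\ref{eq3}) for some projection $Q$. To identify $\hat G$ with $G$, I use that the stable and unstable subspaces of any exponential dichotomy of (\ref{eq3}) are intrinsically determined by $\Phi$ alone, since they coincide with $\{x:\sup_{t\geq 0}\|\Phi(t)x\|<\infty\}$ and $\{x:\sup_{t\leq 0}\|\Phi(t)x\|<\infty\}$ respectively; hence $Q$ has the same image and kernel as $P$, forcing $Q=P$ and $\hat G=G$. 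A diagonal extraction produces a single subsequence $\{s_n\}$ along which both limits of Definition \ref{CompBiAAMeasu} hold, completing the proof.
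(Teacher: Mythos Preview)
Your proof is correct and follows essentially the same approach as the paper's: construct the limit fundamental matrix and projection $P_0$ from translates via Gronwall estimates and Bolzano--Weierstrass, pass to the limit in the dichotomy inequalities, then run the construction backwards using $B(t-s_n)\to A(t)$ together with uniqueness of the dichotomy projection to recover $G$. The only cosmetic differences are that the paper separates the pointwise Bi-almost automorphy and the uniform-on-compacts upgrade into two distinct steps, and obtains $\Phi_n\to\Psi$ first by Arzel\`a--Ascoli before identifying the limit, whereas you compare directly with the known $\tilde\Phi$ via Gronwall; also, the ``diagonal extraction'' at the end is unnecessary, since the further subsequence needed for the reverse projection already carries both limits.
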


Before to prove Theorem \ref{teo001}, we stablish the following Lemma:

\begin{lem}\label{lemad01} Under hypothesis of Theorem \ref{teo001}, there exists a subsequence $\{\zeta_n\} \subset \{s_n\}$ such that, the following limits hold
\begin{equation*}
\lim_{n\to + \infty}\sup_{t\in \mathcal{C}}||\Phi_n(t)-\Psi(t)||=0,\quad \lim_{n\to +
\infty}\sup_{t\in \mathcal{C}}||\Phi_n^{-1}(t)-\Psi^{-1}(t)||=0\, ,
\end{equation*}
%
where $\mathcal{C}\subset \mathbb{R}$ is compact, $\Phi_n(t)=\Phi(t+\zeta_n)\Phi^{-1}(\zeta_n)$ and $\Psi $ is a fundamental matrix of (\ref{eq5}).
\end{lem}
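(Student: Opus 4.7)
\bigskip

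\noindent\textbf{Proof plan for Lemma \ref{lemad01}.}

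The plan is to exhibit $\Psi$ and $\Psi^{-1}$ as limits of $\Phi_n$ and $\Phi_n^{-1}$ through the standard Ascoli--Arzel\`a machinery applied to a family of linear ODE's with a convergent (on compacts) coefficient matrix. First I would record the uniform bound $M:=\Vert A\Vert_{\infty}<\infty$, which is available because $A\in\mathcal{K}AA(\mathbb R,\mathbb R^{d\times d})$ is bounded by Proposition \ref{PropCAA}(2). Writing $\Phi_n(t)=\Phi(t+s_n)\Phi^{-1}(s_n)$, one sees that $\Phi_n$ satisfies
\[
\Phi_n'(t)=A(t+s_n)\Phi_n(t),\qquad \Phi_n(0)=I,
\]
so Gr\"onwall yields $\Vert\Phi_n(t)\Vert\leq e^{M|t|}$ uniformly in $n$, and the integral equation
\[
\Phi_n(t)=I+\int_0^{t}A(u+s_n)\Phi_n(u)\,du
\]
gives equicontinuity on every compact $\mathcal C\subset\mathbb R$ via the uniform bound $\Vert\Phi_n'\Vert\leq Me^{M|t|}$. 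The symmetric identity $(\Phi_n^{-1})'(t)=-\Phi_n^{-1}(t)A(t+s_n)$, $\Phi_n^{-1}(0)=I$, provides the same uniform bound and equicontinuity for $\Phi_n^{-1}$.

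Next I would perform a diagonal extraction over the exhaustion $\mathcal C_k=[-k,k]$: invoking Ascoli--Arzel\`a on $\mathcal C_1,\mathcal C_2,\ldots$ successively to the pair $(\Phi_n,\Phi_n^{-1})$, one obtains a subsequence $\{\zeta_n\}\subset\{s_n\}$ and continuous matrix functions $\Psi,\Theta:\mathbb R\to\mathbb R^{d\times d}$ with
\[
\Phi_n(t)\to\Psi(t),\qquad \Phi_n^{-1}(t)\to\Theta(t)
\]
uniformly on every compact of $\mathbb R$. Since $A(\cdot+\zeta_n)\to B(\cdot)$ uniformly on compacts by \eqref{eq4}, the product $A(u+\zeta_n)\Phi_n(u)$ converges uniformly on compacts to $B(u)\Psi(u)$, and passing to the limit in the integral equation gives
\[
\Psi(t)=I+\int_0^{t}B(u)\Psi(u)\,du,
\]
so $\Psi$ is the fundamental matrix of \eqref{eq5} with $\Psi(0)=I$. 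The analogous passage to the limit in $\Phi_n^{-1}(t)=I-\int_0^{t}\Phi_n^{-1}(u)A(u+\zeta_n)\,du$ shows $\Theta'=-\Theta B$, $\Theta(0)=I$, and uniqueness for this linear matrix ODE forces $\Theta=\Psi^{-1}$; alternatively, passing to the limit in the identity $\Phi_n(t)\Phi_n^{-1}(t)=I$ (permissible under uniform convergence) immediately identifies $\Theta(t)=\Psi^{-1}(t)$.

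The only real subtlety I anticipate is keeping the diagonal extraction clean so that the \emph{same} subsequence $\{\zeta_n\}$ works for both $\Phi_n$ and $\Phi_n^{-1}$; this is handled by simply applying Ascoli--Arzel\`a to the $\mathbb R^{2d\times 2d}$-valued sequence $\bigl(\Phi_n(\cdot),\Phi_n^{-1}(\cdot)\bigr)$ from the outset. Every other step is routine: the uniform bound on $A$, Gr\"onwall, equicontinuity, diagonal extraction, and passage to the limit in the Volterra-type integral equations. No use is made of the exponential dichotomy structure here, which is consistent with the Lemma serving only to produce the limiting fundamental matrix $\Psi$; the dichotomy bound will enter the proof of Theorem \ref{teo001} at the subsequent step, when the Green function of \eqref{eq5} is constructed from $\Psi$ with projection $P_0$ obtained as a limit of conjugates of $P$.
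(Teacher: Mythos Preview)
Your proposal is correct and follows essentially the same strategy as the paper: bound $\Phi_n$ via Gr\"onwall, extract a uniformly convergent subsequence by Arzel\`a--Ascoli, and pass to the limit in the integral equation to identify $\Psi$ as the fundamental matrix of \eqref{eq5}. Two minor differences are worth noting. First, the paper works on a fixed interval $[-a,a]$ and leaves the diagonal extraction over $\mathcal C_k=[-k,k]$ implicit, whereas you make it explicit; your version is cleaner on this point, since the Lemma as stated requires a single subsequence that works for every compact. Second, for the inverse, the paper does \emph{not} apply Arzel\`a--Ascoli to $\Phi_n^{-1}$: once $\Phi_n\to\Psi$ is secured, it subtracts the integral equations for $\Phi_n^{-1}$ and $\Psi^{-1}$ and closes with a second Gr\"onwall estimate to obtain $\Vert\Phi_n^{-1}(t)-\Psi^{-1}(t)\Vert\leq \epsilon\,a\,\Vert\Psi^{-1}\Vert_\infty e^{\Vert A\Vert_\infty a}$ directly, avoiding any further extraction. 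Your route---extract a limit $\Theta$ for $\Phi_n^{-1}$ simultaneously and then identify $\Theta=\Psi^{-1}$ via $\Phi_n\Phi_n^{-1}=I$---is equally valid and arguably tidier, but the paper's direct estimate has the advantage of not requiring the pair extraction at all.
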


\begin{proof}
(\textbf{Proof of Lemma \ref{lemad01}})

It is sufficient to consider the compact interval $\mathcal{C}=[-a, a]$, with $a>0$.

From hypothesis, there exists a subsequence $\{s_n\} \subset \{s_n'\}$ such that the limits in (\ref{eq4}) hold. For this subsequence, let us consider the sequence $\Phi_n^{*}(t)=\Phi(t+s_n)\Phi^{-1}(s_n)$, $n\in \mathbb{N}$; and notice that for each $n \in\N$, $\Phi_n^*(0)=I$ and $ \Phi_n^{*}(\cdot)$ is a fundamental matrix of the system:
\begin{equation*}
x'(t)=A(t+s_n)x(t)\, .
\end{equation*}
Then,  by direct integration it follows that
\begin{equation*}
\Phi_n^*(t)= I+\int_0^tA(u+s_n)\Phi_n^*(u)du.
\end{equation*}
Let $M>0$ be the supremum of $A(\cdot)$; then, by the Gronwall-Bellman Lemma we have
$$\norm{\Phi_n^*(t)}\leq \norm{I}e^{Mt}\leq \norm{I}e^{Ma}\, .$$
That is, $\Phi_n^*$ is uniformly bounded on $\mathcal{C}$. Now, since
\begin{equation}\label{5.3.6}
\left( \Phi_n^*(t)\right)' = A(t+s_n)\Phi_n^*(t),\quad \text{for $n\in\N$  and $t\in \mathcal{C}$}\, ,
 \end{equation}
then the sequence of derivatives $\left( \Phi_n^*(\cdot)\right)'$  is uniformly bounded over $\mathcal{C}$.
It follows from Arzela-Ascoli Theorem that
$\set{\Phi_n^*}$ has a subsequence $\set{\Phi_n}$ (i.e. there
exists an associated subsequence  $\set{\zeta_n} \subseteq
\set{s_n}$) such that $\Phi_n(t)=\Phi(t+\zeta_n)\Phi^{-1}(\zeta_n)$ and is uniformly convergent on $\mathcal{C}$.
That is, there exists $\Psi$ such that
\begin{equation}\label{3.5.37}
 \lim_{n\to +\infty} \Phi_n=\Psi,\;\text{uniformly on $\mathcal{C}$.}
 \end{equation}
On the other hand, since the subsequence $\{\Phi_n\}$ satisfies (\ref{5.3.6}), then
\begin{equation*}
\Phi'_n(t)=A(t+s_n) \Phi_n(t),\quad n\in\N,\; t\in \mathcal{C} .
\end{equation*}
Therefore, $ \Phi'_n$ converges uniformly over $\mathcal{C}$ to $B(t)\Psi(t)$. Consequently, $\Psi(\cdot)$ is differentiable and moreover
\begin{equation*}
\lim_{n\to +\infty}\Phi'_n(t)=\Psi'(t)\;\text{over $\mathcal{C}$ .}
\end{equation*}
Since $\Psi'(t)=B(t) \Psi (t)$ and $\Psi(0)=I$, then $\Psi$  is a fundamental matrix of (\ref{eq5}). That is, $\Psi (\cdot)$ is non singular, and hence  $\Psi^{-1}$ exists  on  $\mathcal{C}$.\\
Now we prove that the following limit holds uniformly on $\mathcal{C}$
$$ \lim_{n\to +\infty}\Phi^{-1}_n(t)=\Psi^{-1}(t)\, .$$
Firstly, notice that $\Phi^{-1}_n$
satisfies the equation
$$ x'(t)= -x(t)A(t+\zeta_n)\, ;$$
and, by direct integration we have
$$\Phi^{-1}_n(t)= I - \int_0^t\Phi^{-1}_n(u)A(u+\zeta_n)du .$$
Similarly,
$$\Psi^{-1}(t)= I - \int_0^t\Psi^{-1}(u)B(u)du\, .$$
Then,
\begin{eqnarray}
\norm{\Phi_n^{-1}(t)-\Psi^{-1}(t)}&\leq & \int_0^t \norm{ \Psi^{-1}(u)B(u)
-\Phi^{-1}_n(u)A(u+\zeta_n)}du \nonumber \\
&\leq& \int_0^t \norm{\Psi^{-1}(u) -\Phi^{-1}_n(u)}du\norm{A}_\infty + \nonumber \\
& + & \int_0^t\norm{A(u+\zeta_n)-B(u)}du\norm{\Psi^{-1}}_\infty . \label{EqqMatrix}
\end{eqnarray}
Now, since the limits in (\ref{eq4}) are uniform in $\mathcal{C}$; given $\epsilon >0$, there exists $N_0\in\N$ such that for all $n\geq N_0$ :
$$ \sup_{t\in \mathcal{C}}\norm{A(t+\zeta_n)-B(t)}<\epsilon\,.$$
Therefore, from inequality (\ref{EqqMatrix}) we have, for $n\geq N_0$
\begin{align*}
\norm{\Phi_n^{-1}(t)-\Psi^{-1}(t)}\leq \epsilon
a\norm{\Psi^{-1}}_\infty+ \int_0^t \norm{\Psi^{-1}(u)
-\Phi^{-1}_n(u)}du\,\norm{A}_\infty\, ;
\end{align*}
and, from the Gronwall-Bellman Lemma again, we conclude
\begin{align*}
\norm{\Phi_n^{-1}(t)-\Psi^{-1}(t)}\leq \epsilon
a\norm{\Psi^{-1}}_\infty e^{\norm{A}_\infty a}\, .
\end{align*}
\end{proof}
Next, we prove the main result of this section.
\begin{proof}
(\textbf{Proof of Theorem \ref{teo001}})

\noindent Let  $\Phi$ be a fundamental matrix of
(\ref{eq3}) and
\begin{equation}\label{eq31}
G(t,s):=
\begin{cases}
\Phi(t)P\Phi^{-1}(s),\quad &\text{$t\geq s$ ,}\\
-\Phi(t)(I-P)\Phi^{-1}(s),\quad &\text{$t < s$} .
\end{cases}
\end{equation}
its associated Green function such that $||G(t,s)||\leq Ke^{-\alpha\abs{t-s}}$,\; $t,s\in\R$.\\
\noindent The proof will be divided into three steps.

\noindent \textbf{Step 1. The system (\ref{eq5}) has an $(\alpha, K, P_0)$-exponential dichotomy}.

Because of Lemma \ref{lemad01}, there exists a subsequence $\{\zeta_n\} \subset \{s_n\}$ such that the following limits are uniform on compact subsets of $\mathbb{R}$:
\begin{equation}
\lim_{n\to +\infty}\Phi_n(t)=\Psi(t)\, \, \, \, {\rm and}\, \, \, \, \lim_{n\to +\infty}\Phi_n^{-1}(t)=\Psi^{-1}(t)\, ,
\end{equation}
where $\Phi_n(t)=\Phi(t+\zeta_n)\Phi^{-1}(\zeta_n)$ and $\Psi $ is a fundamental matrix of (\ref{eq5}).

%

Note that $\norm{\Phi(\zeta_n)P\Phi^{-1}(\zeta_n)} \leq
K$\;and\;$\norm{ \Phi(\zeta_n)Q\Phi^{-1}(\zeta_n)} \leq K$, where $Q=I-P$. Thus, the sequences of matrices $\{\Phi(\zeta_n)P\Phi^{-1}(\zeta_n) \} $  and $ \{ \Phi(\zeta_n)Q\Phi^{-1}(\zeta_n) \}$ are bounded; therefore,
there exists a subsequence
$\set{\eta_n}\subset \set{\zeta_n}$ such that:
\begin{align*}
&\lim_{n\to +\infty}\Phi(\eta_n)P\Phi^{-1}(\eta_n)=: P_0\, ,\\
&\lim_{n\to +\infty}\Phi(\eta_n)Q\Phi^{-1}(\eta_n)=: Q_0\, .
\end{align*}
Notice that\, $P_0^2=P_0$ and $P_0+Q_0=I$. Now, taking $n\to +\infty$  in the following inequalities:
\begin{align*}
&\norm{\left( \Phi(t+\eta_n)\Phi^{-1}(\eta_n) \right) \Phi(\eta_n)P\Phi^{-1}(\eta_n) \left( \Phi(s+\eta_n)\Phi^{-1}(\eta_n) \right)^{-1}}\leq
Ke^{-\alpha(t-s)},\quad t\geq s\\
&\norm{\left( \Phi(t+\eta_n)\Phi^{-1}(\eta_n) \right) \Phi(\eta_n)Q\Phi^{-1}(\eta_n) \left( \Phi(s+\eta_n)\Phi^{-1}(\eta_n) \right)^{-1}}\leq
Ke^{-\alpha(s-t)},\quad t < s\, ,
\end{align*}
we conclude that
\begin{align*}
&\norm{\Psi(t)P_0\Psi^{-1}(s)}\leq Ke^{-\alpha(t-s)},\; t\geq s\, ,\\
&\norm{\Psi(t)Q_0\Psi^{-1}(s)}\leq Ke^{-\alpha(s-t)},\; t < s\, .
\end{align*}
That is, there exists a projection matrix $P_0$, such that the homogeneous system (\ref{eq5}) has an $(\alpha, K, P_0)$-exponential dichotomy with Green function:
\begin{equation}\label{eq32}
\tilde{G}(t,s):=
\begin{cases}
\Psi(t)P_0\Psi^{-1}(s),\quad &\text{$t\geq s$\, ,}\\
-\Psi(t)Q_0\Psi^{-1}(s),\quad &\text{$t < s$}\, .
\end{cases}
\end{equation}

\noindent \textbf{Step 2. The Green function $G(\cdot, \cdot)$   defined in (\ref{eq31}) is Bi-almost automorphic}.

Let $\{ s_n'\}$ be an arbitrary sequence. Then, from hypothesis and \textbf{Step 1}, there exists a subsequence $\{\eta_n\} \subset \{s_n'\}$ such that the following pointwise limit holds
$$\lim_{n\to +\infty}G(t+\eta_n, s+\eta_n)=\tilde{G}(t,s)\, ;$$
where,
\begin{equation}\label{eq33}
{G}(t+\eta_n,s+\eta_n):=
\begin{cases}
\Phi_n(t)P_n\Phi_n^{-1}(s),\quad &\text{$t\geq s$}\\
-\Phi_n(t)Q_n\Phi_n^{-1}(s),\quad &\text{$t < s$}\, ,
\end{cases}
\end{equation}
$\Phi_n(t)=\Phi(t+\eta_n)\Phi^{-1}(\eta_n)$, $P_n:=\Phi(\eta_n)P\Phi^{-1}(\eta_n),\,\, Q_n:=\Phi(\eta_n)Q\Phi^{-1}(\eta_n)$
 and $\tilde{G}(\cdot, \cdot)$ was defined in (\ref{eq32}).


We claim that, there exists a subsequence $\set{\xi_n}\subseteq \set{\eta_n}$ such
that the following limit is attained
$$\lim_{n\to +\infty}\tilde{G}(t-\xi_n, s-\xi_n)=G(t,s)\, .$$
In fact, since $\Psi( \cdot )$, with $\Psi(0)=I$,  is a fundamental matrix of (\ref{eq5}), then for each $n\in \mathbb{N}$, $\Psi_n(t):=\Psi(t-\eta_n)\Psi^{-1}(-\eta_n)$  is a fundamental matrix of the system
$$ z'(t)=B(t-\eta_n)z\, .$$
Arguing as in the \textbf{Step 1}, we can find a subsequence $\{\xi_n\} \subset \{ \eta_n \}$ such that
%
%
%
\begin{align*}
&\lim_{n\to +\infty}\Psi(-\xi_n)P_0\Psi^{-1}(-\xi_n)=:\tilde{P}_1,\\
&\lim_{n\to +\infty}\Psi(-\xi_n)Q_0\Psi^{-1}(-\xi_n)=: \tilde{Q}_1\, .
\end{align*}
Notice that $\tilde{P}_1^2= \tilde{P}_1$,\, $\tilde{P}_1+
\tilde{Q}_1=I$; and, as in the proof of Lemma \ref{lemad01}, we have that the limits $\lim_{n\to +\infty}B(t-\xi_n)= A(t)$ and  $\lim_{n\to
+\infty}\Psi_n(t)= \Upsilon(t)$ are uniform on compact subsets of $\R$, where \,$\Upsilon(t)$\, is a
fundamental matrix of system (\ref{eq3}), note that $\Upsilon(0)=I$. Then,
there exists a non singular matrix $C$ such that \, $\Upsilon(t)=\Phi(t)C$,\,
and because $\Upsilon(0)=\Phi(0)=I$, we have   $C=I$,  therefore \,
$\Upsilon(t)=\Phi(t)$. On the other hand, we know that the projection for an exponential dichotomy is unique, then \,$\tilde{P}_1=P$ and
$\tilde{Q}_1=Q$. All this reasoning implies that
$$ \lim_{n\to + \infty}\tilde{G}(t-\xi_n,s-\xi_n)=G(t,s)\, , $$
for each point $(t,s)\in \mathbb{R}^2$, as we claimed.
%

\noindent \textbf{Step 3: The Green function $G(\cdot, \cdot)$   defined in (\ref{eq31}) is compact Bi-almost automorphic}.

From  Lemma \ref{lemad01}, there exists a subsequence $\{\eta_n\} \subset \{s_n\}$ such that if $\mathcal{C}=[a,b]$ is a compact subset  of $\mathbb{R}$, then the following limits hold:
\begin{equation}\label{NewConvComp}
\lim_{n\to +\infty}\sup_{t\in \mathcal{C}}\norm{\Phi_n(t) -\Psi(t)}=0\, \, \, \, {\rm and}\, \, \, \, \lim_{n\to +\infty}\sup_{t\in \mathcal{C}}\norm{\Phi_n^{-1}(t)-\Psi^{-1}(t)}=0\, ,
\end{equation}
where, $\Phi_n(t)=\Phi(t+\eta_n)\Phi^{-1}(\eta_n)$ and $\Psi $ is a fundamental matrix of the system in (\ref{eq5}). Also, there exist positive constants $C_1,$ $C_2,$ $C_3$, such that for every $n\in \mathbb{N}$ and  $t\in \mathcal{C}$, we have:
\begin{equation}\label{EeQqNn}
\norm{\Phi_n(t)}\le C_1,\; \norm{\Phi_n^{-1}(t)}\le C_2, \; \norm{\Psi(t)}\le C_3\, .
\end{equation}
Furthermore,
\begin{align}
&P_n=\Phi(\eta_n)P\Phi^{-1}(\eta_n)\to P_0\, ,\, \,  \text{as  $n\to +\infty$\, ,} \label{CcOoNn01}
\\
&Q_n=\Phi(\eta_n)Q\Phi^{-1}(\eta_n)\to Q_0\, ,\, \,  \text{as  $n\to +\infty$\, .} \label{CcOoNn02}
\end{align}
We claim that, if $\mathbb{K}=[a,b]\times [a,b]\subset \mathbb{R}^2$, then
\begin{equation}\label{EeeQqqNnn}
\lim_{n\to +\infty}\sup_{(t,s)\in \mathbb{K}}||G(t+\eta_n,s+\eta_n)-\tilde{G}(t,s)||=0\, ,
\end{equation}
where, $G(t+\eta_n,s+\eta_n)$ was defined in (\ref{eq33}). In fact,

\noindent 1) If $t\geq s$, then
%
$$\norm{G(t+s_n,s+s_n)-\tilde{G}(t,s)}=\norm{\Phi_n(t)P_n\Phi_n^{-1}(s)-\Psi(t)P_0\Psi^{-1}(s)}=$$
    $$=\norm{\Phi_n(t)(P_n-P_0+P_0)\Phi_n^{-1}(s)-\Psi(t)P_0\Psi^{-1}(s)}=$$
    $$=\norm{\Phi_n(t)(P_n-P_0)\Phi_n^{-1}(s)+\Phi_n(t)P_0\Phi_n^{-1}(s)-\Psi(t)P_0\Psi^{-1}(s)}=$$
    $$=\norm{\Phi_n(t)(P_n-P_0)\Phi_n^{-1}(s)+(\Phi_n(t)-\Psi(t)+\Psi(t))P_0\Phi_n^{-1}(s)-\Psi(t)P_0\Psi^{-1}(s)}=$$
    $$=\norm{\Phi_n(t)(P_n-P_0)\Phi_n^{-1}(s)+(\Phi_n(t)-\Psi(t))P_0\Phi_n^{-1}(s)+\Psi(t)P_0\Phi_n^{-1}(s)-\Psi(t)P_0\Psi^{-1}(s)}=$$
    $$=\norm{\Phi_n(t)(P_n-P_0)\Phi_n^{-1}(s)+(\Phi_n(t)-\Psi(t))P_0\Phi_n^{-1}(s)+\Psi(t)P_0(\Phi_n^{-1}(s)-\Psi^{-1}(s))}\le$$
    $$\le \norm{\Phi_n(t)}\, \norm{P_n-P_0}\, \norm{\Phi_n^{-1}(s)}+\norm{\Phi_n(t)-\Psi(t)}\,\norm{P_0}\,\norm{\Phi_n^{-1}(s)}+$$
    $$+\norm{\Psi(t)}\,\norm{P_0}\,\norm{\Phi_n^{-1}(s)-\Psi^{-1}(s)}<$$
    $$<A'\norm{P_n-P_0}+B'\norm{\Phi_n(t)-\Psi(t)}+C'\norm{\Phi_n^{-1}(s)-\Psi^{-1}(s)}\, ,$$
for some positive constants $A^{'},B^{'}, C^{'}$. Thus,
\begin{eqnarray*}
\norm{G(t+s_n,s+s_n)-\tilde{G}(t,s)}& < & A'\norm{P_n-P_0}+B'\sup_{t\in [a,b]}\norm{\Phi_n(t)-\Psi(t)}+\\
&+&  C'\sup_{s\in [a,b]}\norm{\Phi_n^{-1}(s)-\Psi^{-1}(s)}\, .
\end{eqnarray*}

\noindent 2) If $t<s$, then (as in the previous computation) we have
\begin{eqnarray*}
\norm{G(t+s_n,s+s_n)-\tilde{G}(t,s)}& = &\norm{\Phi_n(t)Q_n\Phi_n^{-1}(s)-\Psi(t)Q_0\Psi^{-1}(s)}<\\
&<&A^{'}\norm{Q_n-Q_0} +  B^{'}\sup_{t\in [a,b]} \norm{\Phi_n(t)-\Psi(t)}+ \\
&+& C^{'}\sup_{s\in [a,b]}\norm{\Phi_n^{-1}(s)-\Psi^{-1}(s)}\, ,
\end{eqnarray*}
Therefore, from 1), 2), (\ref{NewConvComp}), (\ref{CcOoNn01}) and (\ref{CcOoNn02}) we conclude (\ref{EeeQqqNnn}).

Analogously, a subsequence $\{\xi_n\}\subset\{\eta_n\}$ can be obtained  such that, on compact sets  $\mathbb{K}\subset \mathbb{R}^2$, the following limit holds
$$\underset{n\to +\infty}{\lim}\sup_{(t,s)\in \mathbb{K}}||\tilde{G}(t-\xi_n,s-\xi_n)-G(t,s)||=0\, .$$

\end{proof}

\begin{rem} In the work \cite{coronel2016almost}, the authors assume that the fundamental matrix solution $
\Phi$ of the system 
$$x'(t)=A(t)x(t)\, ,$$
commutes with the projection $P$ in order to prove the integral Bi-almost automorphy of the Green function $G(\cdot, \cdot)$; in our result, namely Theorem \ref{teo001}, this hypothesis is not necessary.
\end{rem}

\begin{rem} From {\bf Step 1} and {\bf Step 2} of the previous proof, we conclude that Theorem \ref{teo001} can be stated in the almost automorphic category; thus, for systems
$$x'(t)=A(t)x(t)\, ,$$
in which $A(\cdot)$ is an almost automorphic matrix.
\end{rem}

\begin{rem}
Although $G(\cdot, \cdot)$ is measurable,  discontinuous  and compact Bi-almost automorphic, it does not contradicts the result of Veech \cite{veech1969complementation}, where he proved that \textbf{on locally compact groups, Haar measurable almost automorphic functions are continuous}. To the contrary, this put in light a crucial difference between fully almost automorhic functions, where we take sequences in the full euclidean group $\mathbb{R}^2$ (in this case), and the measurable (compact) Bi-almost automorphic ones, in which the sequences belongs to the proper subgroup $\Delta_2 \subset \mathbb{R}^2$, which is the principal diagonal of $\mathbb{R}^2$.
\end{rem}

The following Lemma is crucial in the proof of invariance of the compact almost automorphic function space under convolution products whose kernel is the Green function $G(\cdot, \cdot)$ (Theorem \ref{InvConv}).

\begin{lem}\label{almostUC} (\textbf{$\Delta_2$-like uniform continuity of the Green function $G(\cdot , \cdot)$}) Let $\{t_n\}$ and $ \{s_n\}$ be real sequences such that $|t_n-s_n|\to 0,$ as $n\to +\infty$; then, for each $(t,s) \in \mathbb{R}^2$ we have  $\norm{G(t+t_n,s+t_n)-G(t+s_n,s+s_n)} \to 0,$ as $n\to +\infty$.
\end{lem}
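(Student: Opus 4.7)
The plan is to exploit the key structural observation that shifting both coordinates by the same amount preserves which side of the diagonal $\Delta_2$ the pair lies on. Indeed, $(t+t_n)-(s+t_n)=t-s=(t+s_n)-(s+s_n)$, so the two points $(t+t_n,s+t_n)$ and $(t+s_n,s+s_n)$ lie on the same side of $\Delta_2$ (or both on it, when $t=s$), and the \emph{same} branch of the piecewise definition of $G$ in (\ref{eq31}) applies to both. This means the singularity of $G$ across $\Delta_2$ is completely avoided by the symmetry of the double shift, and no case splitting between the two branches is required.

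First I would fix $(t,s)\in\mathbb{R}^2$, set $c:=t-s$, and define the single-variable auxiliary function
$$H_c(\xi):=G(\xi+c,\xi),\qquad \xi\in\mathbb{R}.$$
By the definition of the Green function, $H_c(\xi)=\Phi(\xi+c)P\Phi^{-1}(\xi)$ when $c\ge 0$ and $H_c(\xi)=-\Phi(\xi+c)(I-P)\Phi^{-1}(\xi)$ when $c<0$. In either case $H_c$ is smooth in $\xi$, the piecewise structure having been eliminated.

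Next I would differentiate $H_c$ and bound its derivative uniformly in $\xi$. Using $\Phi'=A\Phi$ together with the identity $(\Phi^{-1})'=-\Phi^{-1}A$ (obtained by differentiating $\Phi\Phi^{-1}=I$), a direct computation gives, for both sign choices of $c$,
$$H_c'(\xi)=A(\xi+c)H_c(\xi)-H_c(\xi)A(\xi),\qquad \xi\in\mathbb{R}.$$
The exponential dichotomy estimate yields $\|H_c(\xi)\|\le K e^{-\alpha|c|}$, and the compact almost automorphy of $A(\cdot)$ implies its boundedness, say $\|A\|_\infty\le M<\infty$. Hence
$$\|H_c'(\xi)\|\le 2MK\,e^{-\alpha|c|},\qquad \xi\in\mathbb{R},$$
so $H_c$ is globally Lipschitz, in particular uniformly continuous on $\mathbb{R}$.

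To conclude, I would rewrite the expression in the lemma as
$$G(t+t_n,s+t_n)-G(t+s_n,s+s_n)=H_c(s+t_n)-H_c(s+s_n),$$
which is valid for every $n$ and every $(t,s)\in\mathbb{R}^2$ by the branch-consistency observation above. Since $|(s+t_n)-(s+s_n)|=|t_n-s_n|\to 0$, the Lipschitz continuity of $H_c$ gives the claimed convergence to zero. I do not foresee a genuine obstacle; the only conceptual subtlety is recognizing that the diagonal discontinuity of $G$ is sidestepped precisely by the symmetric double shift, which is exactly what turns an otherwise delicate ``uniform continuity near the diagonal'' question into a straightforward derivative bound on each off-diagonal line.
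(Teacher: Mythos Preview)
Your proof is correct, and it takes a genuinely different and more elementary route than the paper. The paper argues via the compact Bi-almost automorphy of each branch $G^1,G^2$ established in Theorem~\ref{teo001}: it fixes a branch, takes an arbitrary convergent subsequence of $\alpha_n(t,s)=\|G^2(t+t_n,s+t_n)-G^2(t+s_n,s+s_n)\|$, passes to a further subsequence along which $G^2(\cdot+s_n'',\cdot+s_n'')\to\tilde G^2$ uniformly on compacts, and then uses a triangle-inequality splitting together with the continuity of $\tilde G^2$ to force the limit to be zero. Your argument bypasses Theorem~\ref{teo001} entirely: the reduction to the single-variable function $H_c(\xi)=G(\xi+c,\xi)$, the identity $H_c'(\xi)=A(\xi+c)H_c(\xi)-H_c(\xi)A(\xi)$, and the dichotomy bound $\|H_c\|\le Ke^{-\alpha|c|}$ immediately give the explicit Lipschitz estimate $\|G(t+t_n,s+t_n)-G(t+s_n,s+s_n)\|\le 2MK\,e^{-\alpha|t-s|}\,|t_n-s_n|$, which is in fact \emph{uniform} in $(t,s)\in\mathbb{R}^2$---stronger than the pointwise statement of the lemma. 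The trade-off is scope: the paper's method isolates the abstract mechanism (compact Bi-almost automorphy of each branch relative to compacts avoiding $\Delta_2$) and would apply to kernels not arising from an ODE, whereas your derivative bound is specific to the Green-function structure but delivers a sharper and self-contained conclusion.
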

\begin{proof}
Let us denote by $\mathcal{U}:=\{(t,s) \in \mathbb{R}^2\, :\, t < s\}$ and by $\mathcal{V}:=\{(t,s) \in \mathbb{R}^2\, :\, t \geq s\}$, also define $G^2(t,s):=-\Phi(t)Q\Phi^{-1}(s)$ and $G^1(t,s):=\Phi(t)P\Phi^{-1}(s)$. Since $\mathcal{U}, \mathcal{V}$ constitute a partition of $\mathbb{R}^2$, then any point $(t,s)$ is either in $\mathcal{U}$ or in $\mathcal{V}$. Let us suppose that $(t,s)\in \mathcal{U}$ and take the sequence
\begin{equation}
\alpha_n(t,s)=\norm{G^2(t+t_n,s+t_n)-G^2(t+s_n,s+s_n)}\, .
\end{equation}
We claim that $\alpha_{n}(t,s) \to 0$, when $n\to +\infty$. In fact, let $\{\alpha_n'(t,s)\} \subset \{\alpha_n(t,s)\}$ be a convergent subsequence which converges to $\psi_0(t,s)$, where
$$ \alpha_n'(t,s)=\norm{G^2(t+t_n',s+t_n')-G^2(t+s_n',s+s_n')}\, , $$
where $ \{t_n'\} \subset \{t_n\}$ and $\{s_n'\} \subset \{s_n\}$. We must prove that $\psi_0(t,s)=0$.

Since $G^2(\cdot , \cdot )$ is compact Bi-almost automorphic on $\mathcal{U} $, there exists a subsequence $\{s_n''\} \subset \{s_n'\}$ and a function $\tilde{G}^2$ such that the following limits holds:
\begin{eqnarray}
\lim_{n\to +\infty} \sup_{(t,s) \in \mathbb{K}} \norm{ G^2(t+s_n'',s+s_n'')- \tilde{G}^2(t,s) } &=&0\, , \label{EQEx01}\\
\lim_{n\to +\infty} \sup_{(t,s) \in \mathbb{K}} \norm{ \tilde{G}^2(t-s_n'',s-s_n'')- G^2(t,s) }&=&0\nonumber \, ;
\end{eqnarray}
where, $\mathbb{K}$ is a compact subset of $\mathcal{U}$. As a byproduct, the limit function $\tilde{G}^2$ is continuous on $\mathcal{U}$.

Since $\{|t_n-s_n|\}$ is bounded, there exists $R>0$ such that $t_n-s_n \in [-R,R]$ (a compact interval in $\mathbb{R}$). Therefore the points $(t_n-s_n,t_n-s_n)\in L_R$, where $L_R:=\{(z,z)\in \mathbb{R}^2\, :\, |z|\leq \sqrt{2}R\}$ a compact subset of (the diagonal of) $\mathbb{R}^2$, which implies that $L_R(t,s):=(t,s)+L_R$ is a compact subset of $\mathcal{U}$. Now, from the inequalities:
\begin{eqnarray*}
\alpha_{n}''(t,s) & =  & \norm{ G^2(t+t_n'',s+t_n'')-G^2(t+s_n'',s+s_n'')} \\
& \leq & \norm{ G^2(t+t_n''-s_n''+s_n'',s+t_n''-s_n''+s_n'')-\tilde{G}^2(t+t_n''- s_n'',s+t_n''- s_n'') } + \\
&+& \norm{ \tilde{G}^2(t+t_n''- s_n'',s+t_n''- s_n'')-\tilde{G}^2(t,s)}+\norm{ \tilde{G}^2(t,s)-G^2(t+s_n'',s+s_n'') }\\
&\leq & \sup_{(z_1,z_2)\in L_R(t,s)}\norm{ G^2(z_1+s_n'',z_2+s_n'')-\tilde{G}^2(z_1,z_2) }+\\
&+& \norm{ \tilde{G}^2(t+t_n''- s_n'',s+t_n''- s_n'')-\tilde{G}^2(t,s)}+\norm{\tilde{G}^2(t,s)-G^2(t+s_n'',s+s_n'') }\, ,
\end{eqnarray*}
from (\ref{EQEx01}) and the continuity of $\tilde{G}^2$ we conclude that $\alpha_{n}''(t,s) \to 0$, as $n\to +\infty$. This implies that $\psi_0(t,s)=0$; therefore, $\alpha_{n}(t,s) \to 0$ when $n\to +\infty$, as claimed.

Analogously, it can be shown that if $(t,s)\in \mathcal{V}$, then
\begin{equation*}
\lim_{n\to +\infty} \norm{ G^1(t+t_n,s+t_n)-G^1(t+s_n,s+s_n) }=0\, .
\end{equation*}
\end{proof}
In the proof of this Lemma, only the fact that each branch of the Green function, namely the functions  $G^1$ and $G^2$, being compact Bi-almost automorphic in their respective domains, was utilized, without requiring the full compact Bi-almost automorphy of $G(\cdot , \cdot)$.
That is, the previous Lemma is true only assuming that $G(\cdot , \cdot)$ is \textit{compact Bi-almost automorphic relative to the following collection}
$$ \Lambda:=\Big{ \{ }  \mathbb{K} \subset \mathbb{R}^2\, : \, \mathbb{K}\, \textit{is   compact  and } \mathbb{K} \cap \Delta_2 = \emptyset \, \Big{ \} } .$$
 \textit{Compact Bi-almost automorphic relative to} $\Lambda$, means that in Definition \ref{CompBiAAMeasu}  compact sets are taken in $\Lambda$.
%

\section{Compact almost automorphic solutions of non-autonomous differential equations}\label{section4}

\subsection{invariance of $\mathcal{K}AA$ under convolution product with kernel $G(\cdot , \cdot)$}

\begin{thm}\label{InvConv}
Let $A(\cdot)$ be a compact almost automorphic matrix and suppose that system (\ref{eq3}) has an $(\alpha, K, P)$-exponential dichotomy with Green function $G(\cdot,\cdot)$. Then the operator $\mathcal{G}_1$ defined by 
\begin{equation}\label{Operator1}
\mathcal{G}_1 u (t):=\int_{\R}G(t,s)u(s)ds\, ,
\end{equation}
leaves invariant the space $\mathcal{K}AA(\mathbb{R}; \mathbb{C}^p)$.\\
%
%
\end{thm}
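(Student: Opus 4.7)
The plan is to verify the two criteria of Proposition \ref{CharCAA}: almost automorphy and uniform continuity of $v := \mathcal{G}_1 u$. Well-definedness, boundedness ($\|v\|_\infty \leq (2K/\alpha)\|u\|_\infty$), and continuity of $v$ follow directly from the exponential bound $\|G(t,s)\| \leq K e^{-\alpha|t-s|}$ and the boundedness of $u$ (the jump of $G$ along $\Delta_2$ involves a single value of $s$ for each $t$, which is negligible under dominated convergence).

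For almost automorphy, given an arbitrary sequence $\{s_n'\} \subset \mathbb{R}$, I would extract a common subsequence $\{s_n\}$ serving both $A(\cdot)$ (so that by Theorem \ref{teo001} the system $y' = B(t)y$ has an $(\alpha, K, P_0)$-exponential dichotomy with Green function $\tilde{G}$, and the pointwise limit $G(t+s_n, s+s_n) \to \tilde{G}(t,s)$ holds for each $(t,s) \in \mathbb{R}^2$) and $u(\cdot)$ (producing a pointwise limit $\tilde{u}$). By translation invariance of Lebesgue measure,
\[
\mathcal{G}_1 u(t + s_n) = \int_{\mathbb{R}} G(t + s_n, s + s_n)\, u(s + s_n)\, ds.
\]
Lemma \ref{lemBou} supplies $\|\tilde{G}(t,s)\| \leq K e^{-\alpha|t-s|}$, giving the integrable dominant $K \|u\|_\infty e^{-\alpha|t-s|}$. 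Dominated convergence yields $\mathcal{G}_1 u(t + s_n) \to \tilde{v}(t) := \int_{\mathbb{R}} \tilde{G}(t,s) \tilde{u}(s)\, ds$ for each $t$, and the reverse limit $\tilde{v}(t - s_n) \to \mathcal{G}_1 u(t)$ is obtained symmetrically from the hull system.

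The main obstacle is uniform continuity of $v$. The cleanest route uses the ODE: standard differentiation through the Green kernel (splitting the integral at $s=t$ and applying Leibniz's rule, together with $P + Q = I$ and $\partial_t G(t,s) = A(t)G(t,s)$ off the diagonal) shows that $v$ is the unique bounded solution of $v'(t) = A(t) v(t) + u(t)$. Since $A$, $v$, and $u$ are all bounded, $v'$ is bounded, so $v$ is Lipschitz and hence uniformly continuous. Alternatively, one may exploit the $\Delta_2$-like uniform continuity of $G$ (Lemma \ref{almostUC}) by writing
\[
v(t+h) - v(t) = \int_{\mathbb{R}} [G(t+h, s+h) - G(t, s)]\, u(s+h)\, ds + \int_{\mathbb{R}} G(t, s)\, [u(s+h) - u(s)]\, ds,
\]
where the second integral is bounded by $(2K/\alpha)\sup_s \|u(s+h) - u(s)\|$, small by uniform continuity of $u$, and the first is controlled via Lemma \ref{almostUC} and dominated convergence; the delicate step here is upgrading pointwise convergence of the integrand to an estimate uniform in $t$. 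Once both almost automorphy and uniform continuity are in hand, Proposition \ref{CharCAA} concludes $\mathcal{G}_1 u \in \mathcal{K}AA(\mathbb{R}; \mathbb{C}^p)$.
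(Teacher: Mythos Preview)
Your argument is correct. The almost-automorphy step matches the paper's proof essentially line for line (Theorem \ref{teo001} for the hull dichotomy, Lemma \ref{lemBou} for the dominant, dominated convergence for the limit).

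For uniform continuity, however, you take a genuinely different route. The paper proceeds via the sequential characterization: given $\{t_n\},\{s_n\}$ with $|t_n-s_n|\to 0$, it splits $|\mathcal{G}_1 u(t_n)-\mathcal{G}_1 u(s_n)|$ into two integrals, applies Lemma \ref{almostUC} at the base point $(0,s)$ to kill the first, and uniform continuity of $u$ to kill the second, both under dominated convergence. Your primary route instead observes that $v=\mathcal{G}_1 u$ is the bounded solution of $v'=A(t)v+u$, so $\|v'\|_\infty\le \|A\|_\infty\|v\|_\infty+\|u\|_\infty<\infty$ and $v$ is Lipschitz. This is shorter and entirely bypasses Lemma \ref{almostUC}; the trade-off is that it is specific to Green kernels arising from an ODE with bounded coefficients, whereas the paper's argument is framed so as to extend to abstract $\lambda$-bounded, $\Delta_2$-like uniformly continuous kernels (cf.\ the remark following Theorem \ref{InvConv1}). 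Your second alternative, phrased with a translation parameter $h$, is morally the paper's argument, and your caveat about uniformity in $t$ is exactly the reason the paper works with sequences $\{t_n\},\{s_n\}$ rather than with $h$: the sequential formulation lets one apply Lemma \ref{almostUC} pointwise in $s$ and then invoke dominated convergence, without ever needing a bound uniform in $t$.
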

\begin{proof}
Let  $u\in \mathcal{K}AA(\mathbb{R}; \mathbb{C}^p)$, by Proposition \ref{CharCAA} it is sufficient to prove that $\mathcal{G} u$ is almost automorphic and uniformly continuous. We will accomplish this in two steps.

\noindent \textbf{Step 1: $\mathcal{G} u$ is almost automorphic}. Let $\{s'_n\}$ be any real sequence. Since $G(\cdot,\cdot)$ is  compact  Bi-almost automorphic and $u$ is compact almost automorphic, there exists a subsequence  $\{s_n\}\subseteq \{s'_n\}$ such that on compact subsets of
$\mathbb{R}\times \mathbb{R}$ the following uniform limits holds:
$$\lim\limits_{n\to +\infty} G(t+s_n,s+s_n)=:\tilde{G}(t,s),\quad \lim\limits_{n\to +\infty} \tilde{G}(t-s_n,s-s_n)=G(t,s);$$
and on compact subsets of $\mathbb{R}$ the following uniform limits holds
$$\lim\limits_{n\to +\infty} u(t+s_n)=:\tilde{u}(t),\quad \lim\limits_{n\to +\infty} \tilde{u}(t-s_n)=u(t).$$

In particular the previous limits are pointwise. Let $v(t)=\mathcal{G} u(t)$; then, since $G(\cdot,\cdot)$ is $\lambda$-bounded, for $\lambda(t,s)=Ke^{-\alpha|t-s|}\, , t,s \in \mathbb{R}$, by Lemma \ref{lemBou} and using the Lebesgue's Dominate Convergence Theorem, we have
\begin{equation}\label{Eq001}
\lim\limits_{n\to +\infty} v(t+s_n)=\tilde{v}(t),\, \, \lim_{n\to +\infty}\tilde{v}(t-s_n)=v(t)\, ,
\end{equation}
where
$$\tilde{v}(t):=\int_{-\infty}^{+\infty} \tilde{G}(t,s)\tilde{u}(s)ds.$$
What we have proved is that $v=\mathcal{G} u$ is almost automorphic.

\noindent \textbf{Step 2: $\mathcal{G} u$ is uniformly continuous}. Let $\{t_n\},\{s_n \}$ be two sequences in $\mathbb{R}$ such that $|t_n-s_n| \to 0$ when $n\to +\infty$, then
\begin{eqnarray*}
|\mathcal{G} u(t_n)-\mathcal{G} u(s_n)|&\leq & \int_{\mathbb{R}}\norm{G(t_n, s+t_n)-G(s_n,s+s_n)}|u(s+t_n)|ds \\
&+& \int_{\mathbb{R}}\norm{G(s_n,s+s_n)} |u(s+t_n)-u(s+s_n)|ds\\
&:=&I_1(n)+I_2(n)\, .
\end{eqnarray*}
Now, by Lemma \ref{lemBou}, the $\Delta_2$-like uniform continuity of $G(\cdot,\cdot)$, the uniform continuity of $u$ and the Lebesgue's Dominated Convergence Theorem, we conclude
$$\lim_{n\to +\infty}I_1(n)=0 \, =\, \lim_{n\to +\infty}I_2(n) \, .$$
\end{proof}
\noindent The proof of the following Theorem, is analogous to the one of Theorem \ref{InvConv}.

\begin{thm}\label{InvConv1}
Let $A(\cdot)$ be a compact almost automorphic matrix and suppose that system (\ref{eq3}) has an $(\alpha, K, P)$-exponential dichotomy with Green function $G(\cdot,\cdot)$. Then the operator $\mathcal{G}_2$ defined by 
\begin{equation}\label{Operator2}
\mathcal{G}_2 u (t):=\int_{-\infty}^t G(t,s)u(s)ds\, ,
\end{equation}
leaves invariant the space $\mathcal{K}AA(\mathbb{R}; \mathbb{C}^p)$.\\
\end{thm}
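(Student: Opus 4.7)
The plan is to mirror the two-step argument used for Theorem \ref{InvConv}, introducing a translation in the integration variable to neutralize the $n$-dependence of the upper integration limit. I observe that on the domain $\{s \le t\}$ only the continuous branch $G^1(t,s) = \Phi(t)P\Phi^{-1}(s)$ of the Green function is ever evaluated, so the proof is in fact a mild simplification of the previous one.

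For the almost automorphy part, I would start with an arbitrary real sequence $\{s_n'\}$ and, using Theorem \ref{teo001} together with the compact almost automorphy of $u$, extract a common subsequence $\{s_n\}$ along which $G(t+s_n, s+s_n) \to \tilde{G}(t,s)$ and $u(t+s_n) \to \tilde{u}(t)$ hold pointwise, with the symmetric reverse limits. The substitution $\sigma = s + s_n$ transforms the integral into
\[
\mathcal{G}_2 u(t+s_n) \;=\; \int_{-\infty}^{t} G(t+s_n,\, s+s_n)\, u(s+s_n)\, ds,
\]
so the integration range becomes independent of $n$. Since $\|G(t+s_n,\, s+s_n)\| \le Ke^{-\alpha(t-s)}$ for $s \le t$ and $u$ is bounded, Lebesgue's Dominated Convergence Theorem (with Lemma \ref{lemBou} guaranteeing the analogous bound on $\tilde{G}$, hence integrability of the limit) yields $\mathcal{G}_2 u(t+s_n) \to \tilde{v}(t) := \int_{-\infty}^t \tilde{G}(t,s)\tilde{u}(s)\, ds$. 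The reverse pointwise limit $\tilde{v}(t-s_n) \to \mathcal{G}_2 u(t)$ follows symmetrically.

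For the uniform continuity part, given sequences $\{t_n\}, \{s_n\}$ with $|t_n - s_n| \to 0$, the substitutions $\sigma = s+t_n$ and $\sigma = s + s_n$ put both integrals over the fixed domain $(-\infty, 0]$, and the difference $\mathcal{G}_2 u(t_n) - \mathcal{G}_2 u(s_n)$ decomposes as $I_1(n) + I_2(n)$, where
\[
I_1(n) = \int_{-\infty}^0 \bigl[G(t_n,s+t_n)-G(s_n,s+s_n)\bigr] u(s+t_n)\, ds,
\]
\[
I_2(n) = \int_{-\infty}^0 G(s_n,s+s_n) \bigl[u(s+t_n)-u(s+s_n)\bigr]\, ds.
\]
For each fixed $s \le 0$, the integrand of $I_1(n)$ vanishes by Lemma \ref{almostUC} applied at the base point $(0,s)$ with the sequences $\{t_n\}, \{s_n\}$; the integrand of $I_2(n)$ vanishes by the uniform continuity of $u$ (a consequence of $u \in \mathcal{K}AA(\mathbb{R};\mathbb{C}^p)$ via Proposition \ref{CharCAA}). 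Both integrands are dominated by $2Ke^{\alpha s}\|u\|_\infty$, which is integrable on $(-\infty,0]$, so a final appeal to Dominated Convergence gives $I_1(n), I_2(n) \to 0$.

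The only delicate point is the bookkeeping of the change of variables so that the exponential decay factor $e^{-\alpha|t-s|}$ produces a single dominating function independent of $n$; once this is in place, the $\Delta_2$-like uniform continuity of $G$ (Lemma \ref{almostUC}) and the Bi-almost automorphy obtained in Theorem \ref{teo001} drive the argument exactly as in Theorem \ref{InvConv}.
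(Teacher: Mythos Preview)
Your proposal is correct and follows exactly the route the paper indicates: the paper simply states that the proof of Theorem \ref{InvConv1} is ``analogous to the one of Theorem \ref{InvConv}'', and what you have written is precisely that analogue, with the change of variables $s \mapsto s + s_n$ (resp.\ $s \mapsto s + t_n$) used to freeze the domain of integration so that the $\lambda$-bound $Ke^{-\alpha|t-s|}$ furnishes a single dominating function. The observation that only the branch $G^1$ is active on $\{s\le t\}$ is a harmless simplification; otherwise your two-step argument (Dominated Convergence for almost automorphy, Lemma \ref{almostUC} plus uniform continuity of $u$ for uniform continuity of $\mathcal{G}_2 u$) coincides with the paper's intended proof.
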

\begin{rem} In contrast with the almost automorphic scenario, it is not enough that $G(\cdot,\cdot)$ be Bi-almost automorphy and $\lambda$-bounded, for $\lambda(t,s)=Ke^{-\alpha|t-s|}$ (for instance), to show the invariance of the space $\mathcal{K}AA(\mathbb{R}; \mathbb{C}^p)$ under the convolution products in Theorems \ref{InvConv} and \ref{InvConv1}; it seems very important also the property of being \textbf{$\Delta_2$-like uniformly continuous}, which is a property that meets the Green function $G(\cdot,\cdot)$. The \textbf{$\Delta_2$-like uniform  continuity} of $G(\cdot,\cdot)$ is a hypothesis that has not been considered in \cite[Corillary 1]{abbas2021pseudo}. Note that, the aforementioned hypothesis cannot be concluded from the Bi-almost automorphy and $\lambda$-boundedness of $G(\cdot,\cdot)$. 

\end{rem}

\noindent The proof of the following Proposition is immediate
\begin{prop}\label{PropCBiaa}
If $a(\cdot)$ is compact almost automorphic, then 
$$G_1(t,s):=\int_{s}^t a(\xi)d\xi\, , \, \ {\rm and}\, \,  G_2(t,s):=\exp \left( \int_{s}^t a(\xi)d\xi  \right)\, ,$$
are Bi-almost automorphics and $\Delta_2$-like uniformly continuous.
\end{prop}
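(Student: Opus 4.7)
The plan is to handle $G_1$ first and then deduce the corresponding properties for $G_2 = \exp \circ G_1$ by continuity of the exponential.

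For the Bi-almost automorphy of $G_1$, I would start with an arbitrary sequence $\{s_n'\}\subset\mathbb{R}$ and use the compact almost automorphy of $a(\cdot)$ to extract a subsequence $\{s_n\}$ and a limit function $\tilde a$ for which $a(\cdot+s_n)\to\tilde a(\cdot)$ and $\tilde a(\cdot-s_n)\to a(\cdot)$ uniformly on compact subsets of $\mathbb{R}$. The key is the change of variables $\eta = \xi - s_n$, which gives
$$G_1(t+s_n, s+s_n) = \int_{s+s_n}^{t+s_n} a(\xi)\, d\xi = \int_s^t a(\eta + s_n)\, d\eta.$$
Then for fixed $(t,s)$, the uniform convergence of $a(\cdot+s_n)$ on the compact interval with endpoints $s$ and $t$ yields
$$G_1(t+s_n, s+s_n) \longrightarrow \int_s^t \tilde a(\eta)\, d\eta =: \tilde G_1(t,s).$$
The reverse limit $\tilde G_1(t-s_n, s-s_n) \to G_1(t,s)$ is obtained identically from the second convergence in \eqref{Kaadef}. (In fact, the estimate $|G_1(t+s_n,s+s_n)-\tilde G_1(t,s)| \le |t-s|\sup_{\eta\in[c,d]}|a(\eta+s_n)-\tilde a(\eta)|$, with $[c,d]$ chosen large enough to contain the projection of any compact $\mathbb{K}\subset\mathbb{R}^2$ onto the interval endpoints, even yields the compact Bi-almost automorphy of $G_1$, which is stronger than what is stated.)

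For the $\Delta_2$-like uniform continuity of $G_1$, I would take sequences $\{t_n\},\{s_n\}$ with $|t_n-s_n|\to 0$ and write, for fixed $(t,s)$,
$$G_1(t+t_n, s+t_n) - G_1(t+s_n, s+s_n) = \int_s^t \bigl[a(\xi+t_n) - a(\xi+s_n)\bigr]\, d\xi.$$
By Proposition \ref{CharCAA} the compact almost automorphic function $a$ is uniformly continuous on $\mathbb{R}$, so the integrand tends to $0$ uniformly in $\xi\in[s,t]$ (assuming $s\le t$; the other case is analogous), and the integral vanishes as $n\to+\infty$.

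Finally, $G_2 = \exp\circ G_1$ inherits both properties from $G_1$ since $\exp:\mathbb{R}\to\mathbb{R}$ is continuous: pointwise convergence $G_1(t+s_n,s+s_n)\to\tilde G_1(t,s)$ transports to $G_2(t+s_n,s+s_n)\to\exp(\tilde G_1(t,s))=:\tilde G_2(t,s)$, and similarly for the reverse limit, giving Bi-almost automorphy. For the $\Delta_2$-like uniform continuity, the values $G_1(t+t_n,s+t_n)$ and $G_1(t+s_n,s+s_n)$ lie in a bounded neighbourhood of $G_1(t,s)$ (by the previous step), and $\exp$ is uniformly continuous on bounded sets, so the difference of exponentials tends to $0$. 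There is no serious obstacle: the only point that requires attention is the change of variables translating a time shift of arguments into a shift of the integrand, which is what reduces everything to the (uniform continuity and) compact almost automorphy of $a$ itself.
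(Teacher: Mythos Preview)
Your argument is correct and is exactly the natural route the paper has in mind when it declares the proof ``immediate'': the change of variables $\eta=\xi-s_n$ reduces both the Bi-almost automorphy and the $\Delta_2$-like uniform continuity of $G_1$ to the compact almost automorphy (hence uniform continuity) of $a$, and composition with the locally uniformly continuous map $\exp$ transfers these properties to $G_2$. There is nothing to add; your parenthetical remark that one in fact obtains \emph{compact} Bi-almost automorphy of $G_1$ (and hence of $G_2$) is also correct and slightly sharpens the stated conclusion.
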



\subsection{compact almost automorphic solutions}
Finally, the following theorem arrives
\begin{thm}
If the linear system (\ref{eq3}) has an $(\alpha, K, P)$-exponential
dichotomy with Green function $G(\cdot , \cdot)$, then the system 
$$x'(t)=A(t)x(t)+f(t)\, $$
where $A(\cdot)$ is a compact almost automorphic matrix and $f \in \mathcal{K}AA(\mathbb{R}; \mathbb{C}^p)$, has a unique compact almost automorphic solution $x$, given by
$$x(t)=\int_{\R}G(t,s)f(s)ds\, .$$
Moreover, if $\Phi(\cdot)$ is the fundamental solution of system (\ref{eq3}) and $P$ the projection, then, the solution $x$ is given by
$$x(t)=\int_{-\infty}^t \Phi(t)P\Phi^{-1}(s)f(s)ds+\int_{t}^{\infty} \Phi(t)(I-P)\Phi^{-1}(s)f(s)ds\, .$$
\end{thm}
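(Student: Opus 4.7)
The plan is to assemble this theorem as a direct corollary of the machinery developed in the preceding sections, with the only genuinely new content being a verification that the candidate formula solves the differential equation. The compact almost automorphy of $x(t)=\int_{\R}G(t,s)f(s)ds$ is not the hard part: since $f\in \mathcal{K}AA(\mathbb{R};\mathbb{C}^p)$, Theorem \ref{InvConv} applied to the operator $\mathcal{G}_1$ immediately yields $x = \mathcal{G}_1 f \in \mathcal{K}AA(\mathbb{R};\mathbb{C}^p)$. In particular, $x$ is bounded, with $\|x\|_\infty \leq \tfrac{2K}{\alpha}\|f\|_\infty$, this bound following from the estimate $\|G(t,s)\|\leq K e^{-\alpha|t-s|}$ granted by the $(\alpha,K,P)$-exponential dichotomy.

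Next I would verify that $x$ solves the inhomogeneous equation. Splitting the integral at $s=t$ according to the cases in Definition \ref{Green} gives
\begin{equation*}
x(t)=\int_{-\infty}^{t}\Phi(t)P\Phi^{-1}(s)f(s)\,ds -\int_{t}^{+\infty}\Phi(t)(I-P)\Phi^{-1}(s)f(s)\,ds,
\end{equation*}
which establishes the second representation in the statement (modulo the sign convention used in the theorem). Both integrals converge absolutely by the exponential decay of the Green function, and factoring $\Phi(t)$ allows a routine differentiation under the integral sign: the boundary terms at $s=t$ contribute $\Phi(t)P\Phi^{-1}(t)f(t)+\Phi(t)(I-P)\Phi^{-1}(t)f(t)=f(t)$, while the differentiation of $\Phi(t)$ produces $A(t)$ times the remaining integrals, yielding exactly $x'(t)=A(t)x(t)+f(t)$.

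For uniqueness, I would invoke the standard dichotomy argument: if $x_1$ and $x_2$ are two bounded solutions of the inhomogeneous equation, then $z:=x_1-x_2$ is a bounded solution of the homogeneous system \eqref{eq3}. Writing $z(t)=\Phi(t)P\Phi^{-1}(s)z(s)+\Phi(t)(I-P)\Phi^{-1}(s)z(s)$ and letting $s\to-\infty$ in the first summand and $s\to +\infty$ in the second, the exponential estimates together with the boundedness of $z$ force both contributions to vanish, hence $z\equiv 0$. Since every compact almost automorphic function is bounded (Proposition \ref{PropCAA}, item 2), this rules out any other candidate in $\mathcal{K}AA(\mathbb{R};\mathbb{C}^p)$.

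I do not anticipate a serious obstacle: the whole result is essentially a packaging of Theorems \ref{teo001} and \ref{InvConv}. The only point requiring a touch of care is the interchange of differentiation and integration in the verification that $x$ solves the equation, but this is handled by a routine dominated convergence argument exploiting the exponential bound on $G(\cdot,\cdot)$ and the boundedness of $f$. All the conceptually nontrivial work, namely the compact Bi-almost automorphy of $G$, its $\Delta_2$-like uniform continuity, and the invariance of $\mathcal{K}AA$ under the convolution operator, has already been established earlier in the paper.
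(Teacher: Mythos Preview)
Your proposal is correct and matches the paper's approach: the paper does not give an explicit proof of this theorem, treating it as an immediate consequence of Theorem~\ref{InvConv} (invariance of $\mathcal{K}AA$ under $\mathcal{G}_1$) together with the classical fact that \eqref{SolDeff} is the unique bounded solution under an exponential dichotomy. Your verification of the differential equation and the uniqueness argument are standard and more detailed than what the paper itself supplies; the sign discrepancy you flag in the second representation is indeed present in the paper's statement as written.
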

\noindent Obviously, the previous Proposition can be stated for almost automorphic systems, instead of purely compact almost automorphic ones.




\section{The existence of a positive compact almost automorphic solutions of delay differential equations of biological interest.}\label{section5}
In this section, in the spirit of the papers \cite{abbas2021pseudo,ben2020positive}, we study the existence of a  positive compact almost automorphic solution of the following biological model with non-linear harvesting terms and mixed delays 
\begin{equation}\label{eq11-ps}
\dot{u}(t)=
-\alpha(t)u(t)+\sum_{i=1}^n\beta_i(t)f_i(\lambda_i(t)u(t-\tau_i(t))+b(t)H(u(t-\sigma(t)))\, .
\end{equation}
This model codify, at least, the cases of:
\begin{enumerate}
\item Nicholson: $f_i(z)=ze^{-z}\, .$
\item Lasota-Wazewska: $f_i(z)=e^{-z}\, .$
\item Mackey-Glass: $f_{i,m}(z)=\dfrac{z}{1+z^m}\, .$
\end{enumerate}
Note that, in equation (\ref{eq11-ps}), we are considering a harvesting term with variable delay, this case was not considered in the work \cite{abbas2021pseudo}, but it has been considered in the work \cite{ben2020positive}.

The following are the necessary hypothesis on the coefficients of
(\ref{eq11-ps}) to obtain our results:
\begin{itemize}
\item[\bf A1] The functions $\alpha(t)$, $b(t),\sigma(t)$, $\beta_i(t)$,
$\lambda_i(t)$, $\tau_i(t)$ for $i =1, 2, \cdots, n$ are positive compact almost automorphic functions on $\mathbb{R}$.
\item[\bf A2] The functions $\alpha(\cdot)$, $\beta_i(\cdot)$, $\lambda_i(\cdot)$ are bounded away from zero.
\item[\bf A3] The function $H:\R^+_0 \mapsto \R^+_0$ is bounded and  Lipschitz continuous, i.e. there exists a positive constant $L_H$
such that:
$$ |H(u)-H(v)| < L_H|u-v|\quad\text{for $u,v \in\R^+_0$}$$
\end{itemize}
If $f$ is a real bounded continuous function, defined in its domain $D_f\subset \mathbb{R}$, then $\overline{f}$ and $\underline{f} $ are defined by
$$\overline{f}:=\sup_{t\in D_f}\{ f(t)\} \, , \, \, \underline{f}:=\inf_{t\in D_f}\{ f(t)\}\, .$$
Also define the positive constants $\tau$ and $\overline{\lambda}$ as follows 
$$\tau= \max \{ \max_{1\leq i \leq n} \overline{ \tau_i}, \overline{\sigma}\}\, ,\, \, \displaystyle\overline{\lambda}= \max_{1\leq 1\leq
n}\set{\overline{\lambda_i}}\, .$$ 
Denote by $C([-\tau,0], \mathbb{R})$ the Banach space of continuous functions from $[-\tau,0]$ to $ \mathbb{R}$ under the norm $||\phi||_{\tau}=\sup_{\theta \in [-\tau,0]}|\phi(\theta)|$. Let $\mathcal{C}^+$ be the cone of non-negative functions in $C([-\tau,0], \mathbb{R})$, that is
$$\mathcal{C}^+=\Big{\{ } \phi \in  C([-\tau,0], \mathbb{R})\, : \,  \phi(t)\geq 0\Big{ \} }\, ,$$
also define the set
$$\mathcal{C}_0^+=\Big{\{ } \phi \in \mathcal{C}^+ \, : \,  \phi(0)> 0 \Big{ \} }\, .$$
Due to biological interpretations, the set of admissible
initial conditions for equation (\ref{eq11-ps}) is considered in $\mathcal{C}_0^+$.

If $x(\cdot)$ is defined in the interval $[t_0-\tau,\sigma]$ with $t_0, \sigma \in \mathbb{R}$, then the function $x_t \in C([-\tau,0], \mathbb{R})$ is defined by $x_t(\theta):=x(t+\theta)$, for all $\theta \in [-\tau, 0]$. Therefore, the initial condition for equation (\ref{eq11-ps}) is 
\begin{equation}\label{ICond}
x_{t_0}=\phi\, , \, \, \phi \in \mathcal{C}_0^+\, .
\end{equation}

A solution of the IVP (\ref{eq11-ps}) - (\ref{ICond}) is denoted by $x(t;t_0,\phi)$, but for convenience, we will denote it by $x(t)$. We assume the existence of a unique solution for equation (\ref{eq11-ps}). 
The proof of the following Proposition follows by the comparison principle

\begin{prop}\label{Prop001} If conditions \textbf{A1}-\textbf{A3} holds, then any solution $x$ of equation (\ref{eq11-ps}) with initial condition $\phi \in \mathcal{C}_0^+$, satisfies
$$0\leq \dfrac{\sum_{i=1}^{n}\underline{\beta_i}\, \underline{f_i} +\underline{b} \, \underline{H}}{\overline{\alpha}}\leq \liminf_{t\to +\infty}x(t) \leq \limsup_{t\to +\infty}x(t) \leq \dfrac{\sum_{i=1}^{n}\overline{\beta_i}\, \overline{f_i} +\overline{b} \, \overline{H}}{\underline{\alpha}}\, .$$
\end{prop}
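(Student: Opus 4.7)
The plan is to establish the three-part chain of inequalities in successive stages, each reducing (\ref{eq11-ps}) to a scalar linear ODE by absorbing the delayed nonlinearities into uniform bounds and then invoking the scalar comparison principle. First I would prove positivity: assuming for contradiction that $t^{*} > t_{0}$ is the first time at which $x(t^{*}) = 0$, the minimality of $t^{*}$ together with $\phi(0) > 0$ forces $x \geq 0$ on $[t_{0}, t^{*})$, so the delayed arguments $x(t^{*}-\tau_{i}(t^{*}))$ and $x(t^{*}-\sigma(t^{*}))$ lie in $\R^{+}_{0}$, where $f_{i}$ and $H$ take non-negative values. Evaluating (\ref{eq11-ps}) at $t^{*}$ with $x(t^{*}) = 0$ and using hypotheses \textbf{A1} and \textbf{A3} yields $\dot{x}(t^{*}) \geq 0$, which contradicts a downward crossing. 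Hence $x(t) > 0$ throughout the maximal existence interval.

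With positivity in hand, the bound $-\alpha(t)x(t) \leq -\underline{\alpha}\, x(t)$ (from \textbf{A2}) together with $f_{i} \leq \overline{f_{i}}$ and $H \leq \overline{H}$ convert (\ref{eq11-ps}) into the scalar differential inequality
\begin{equation*}
\dot{x}(t) \leq -\underline{\alpha}\, x(t) + M, \qquad M := \sum_{i=1}^{n} \overline{\beta_{i}}\, \overline{f_{i}} + \overline{b}\, \overline{H}.
\end{equation*}
Comparing $x(t)$ with the solution of the linear ODE $\dot{y} = -\underline{\alpha}\, y + M$, which tends monotonically to $M/\underline{\alpha}$, yields $\limsup_{t\to +\infty} x(t) \leq M/\underline{\alpha}$. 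Reversing all inequalities gives $\dot{x}(t) \geq -\overline{\alpha}\, x(t) + m$ with $m := \sum_{i=1}^{n} \underline{\beta_{i}}\, \underline{f_{i}} + \underline{b}\, \underline{H}$, and the same scalar comparison against $\dot{w} = -\overline{\alpha}\, w + m$ produces $\liminf_{t\to +\infty} x(t) \geq m/\overline{\alpha}$.

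The main point requiring care is the ordering of the three steps: the scalar comparison principle applies only after we know that $x$ stays in the non-negative cone, since only then is $-\alpha(t)x(t)$ controlled above by $-\underline{\alpha}\, x(t)$ and below by $-\overline{\alpha}\, x(t)$. Once this is settled, the delay disappears from the two reduced inequalities because the nonlinearities $f_{i}$ and $H$ contribute only via their uniform bounds, and no restriction on the size of the delays $\tau_{i},\sigma$ is required. The rest is a routine application of the Gronwall-type linear comparison, which is why the author's one-line remark invoking the comparison principle suffices.
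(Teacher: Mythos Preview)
Your proposal is correct and matches the paper's approach: the paper gives no detailed argument and simply states that the result ``follows by the comparison principle,'' which is exactly the scalar comparison against $\dot y=-\underline{\alpha}\,y+M$ and $\dot w=-\overline{\alpha}\,w+m$ that you spell out.

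One small technical point in your positivity step: from $x>0$ on $[t_0,t^{*})$ and $x(t^{*})=0$ you only get $\dot x(t^{*})\le 0$, so combining with $\dot x(t^{*})\ge 0$ yields $\dot x(t^{*})=0$, not an immediate contradiction. The clean fix is the one implicit in your own Step~2: on $[t_0,t^{*}]$ the nonnegative forcing gives $\dot x(t)\ge -\alpha(t)x(t)$, hence $x(t^{*})\ge \phi(0)\exp\bigl(-\int_{t_0}^{t^{*}}\alpha\bigr)>0$, contradicting $x(t^{*})=0$. With that adjustment the argument is complete and faithful to the paper's one-line indication.
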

In particular, Proposition \ref{Prop001} means that all solutions of equation (\ref{eq11-ps}) are biological meaningful.

\noindent We state the following additional assumptions:
\begin{itemize}
\item[\bf A4] For all $1\leq i \leq n$, the functions $f_i:\R^+_0 \To \R^+_0$ are Lipschitz continuous, non-negative and they yield its maximum
value over $\R^+_0$, i.e. $\overline{f_i}=f_i(m_i^*)$ with
$m^*_i\in \R^+_0$ and $f_i$ is non-increasing function for $x > m^*_i$.
\item[\bf A5] There exist two positive constants $\gamma_1$ and $\gamma_2$ such that:
\begin{align*}
&0\leq
\frac{\overline{m^*}}{\underline{\lambda}}<\gamma_1<\frac{1}{\overline{\alpha}}\pa{\sum_{i=1}^n\underline{\beta_i}\, 
f_i(\overline{\lambda} {\gamma_2})+\underline{b}\, \underline{H}}\, ,\\
& \frac{1}{\underline{\alpha}}\pa{\sum_{i=1}^n
\overline{\beta_i}\, \overline{f_i}+ \overline{b}\, \overline{H} } <
\gamma_2\, ;
\end{align*}
where, $\displaystyle\overline{m^*}= \max_{1\leq 1\leq
n}\set{m^*_i}$.
\item[\bf A6] For all $1\leq i \leq n$, there exist positive numbers $\ell_{f_i}$ which are the Lipschitz constants of $f_i$ on $[\overline{m^*}, +\infty)$.
\end{itemize}

\begin{lem}\label{PermSol}
Let  $\Omega_0:=\{ \phi\,   \in\mathcal{C}^+\, :\,  \, \gamma_1 <\phi(t)< \gamma_2\, , \, t \in [-\tau,0] \}$. If conditions \textbf{A1}-\textbf{A5} are fulfilled; then, for every $\phi \in \Omega_0$, the solution $x(t)$ of equation (\ref{eq11-ps}) satisfies
$$\gamma_1 < x(t) < \gamma_2\, ,\ \, t \in [t_0, \zeta(\phi))\, $$
and its existence interval can be extended to $[t_0, +\infty)$.
\end{lem}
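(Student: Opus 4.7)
The plan is to use a standard a priori barrier argument. Let $[t_0,\zeta(\phi))$ be the maximal right interval on which the solution $x(\cdot)$ associated to $\phi\in\Omega_0$ exists, and consider
\[
t^{*}:=\sup\bigl\{\, t\in[t_0,\zeta(\phi))\,:\,\gamma_1<x(s)<\gamma_2\text{ for all }s\in[t_0-\tau,t]\,\bigr\}.
\]
By the strict inequalities in the definition of $\Omega_0$ and continuity of $x$, one has $t^{*}>t_0$. I aim to show $t^{*}=\zeta(\phi)$, whence the uniform bound $x\le\gamma_2$ combined with the classical continuation theorem for retarded functional differential equations (applicable because the right hand side of (\ref{eq11-ps}) is continuous in $t$ and locally Lipschitz in the state, by \textbf{A3}--\textbf{A4}) will force $\zeta(\phi)=+\infty$. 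Suppose, for contradiction, $t^{*}<\zeta(\phi)$. By continuity, either (\emph{upper case}) $x(t^{*})=\gamma_2$ with $x(s)<\gamma_2$ on $[t_0,t^{*})$, in which case $\dot{x}(t^{*})\ge 0$, or (\emph{lower case}) $x(t^{*})=\gamma_1$ with $x(s)>\gamma_1$ on $[t_0,t^{*})$, in which case $\dot{x}(t^{*})\le 0$.

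In the upper case, bounding each nonlinearity by its supremum ($f_i\le\overline{f_i}$ from \textbf{A4} and $H\le\overline{H}$ from \textbf{A3}) and using $\alpha(t^{*})\ge\underline{\alpha}$, the right hand side of (\ref{eq11-ps}) evaluated at $t^{*}$ satisfies
\[
\dot{x}(t^{*})\le -\underline{\alpha}\,\gamma_2+\sum_{i=1}^{n}\overline{\beta_i}\,\overline{f_i}+\overline{b}\,\overline{H}<0,
\]
where the strict inequality is precisely the second clause of \textbf{A5}. This contradicts $\dot{x}(t^{*})\ge 0$.

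In the lower case, for every $s\in[t^{*}-\tau,t^{*}]$ the definition of $t^{*}$ together with \textbf{A5} gives $x(s)\ge\gamma_1>\overline{m^{*}}/\underline{\lambda}$, whence
\[
\overline{m^{*}}<\underline{\lambda}\,\gamma_1\le\lambda_i(t^{*})\,x(t^{*}-\tau_i(t^{*}))\le\overline{\lambda}\,\gamma_2.
\]
Since each $f_i$ is non-increasing on $[m_i^{*},+\infty)\supset[\overline{m^{*}},+\infty)$ by \textbf{A4}, this yields $f_i\bigl(\lambda_i(t^{*})\,x(t^{*}-\tau_i(t^{*}))\bigr)\ge f_i(\overline{\lambda}\,\gamma_2)$. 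Combining with $H\ge\underline{H}$, $\beta_i\ge\underline{\beta_i}$, $b\ge\underline{b}$ and $\alpha(t^{*})\le\overline{\alpha}$,
\[
\dot{x}(t^{*})\ge -\overline{\alpha}\,\gamma_1+\sum_{i=1}^{n}\underline{\beta_i}\,f_i(\overline{\lambda}\,\gamma_2)+\underline{b}\,\underline{H}>0
\]
by the first clause of \textbf{A5}, contradicting $\dot{x}(t^{*})\le 0$. Hence $\gamma_1<x(t)<\gamma_2$ throughout $[t_0,\zeta(\phi))$, and the extension $\zeta(\phi)=+\infty$ follows from boundedness as indicated above. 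The main delicate step is the lower barrier: the assumption $\gamma_1>\overline{m^{*}}/\underline{\lambda}$ in \textbf{A5} is exactly what places the argument of every $f_i$ inside the region of monotone decrease supplied by \textbf{A4}, which is what allows the lower estimate $f_i(\cdot)\ge f_i(\overline{\lambda}\,\gamma_2)$ to be combined consistently with the upper barrier already established.
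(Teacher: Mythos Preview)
Your proof is correct and follows essentially the same first-exit-time/barrier argument as the paper: assume a first time where one of the strict bounds is violated, and derive a sign contradiction on $\dot{x}$ from the two inequalities in \textbf{A5}. The only organizational difference is that the paper treats the upper and lower barriers sequentially (first proving $x(t)<\gamma_2$ for all $t$, then using that fact in the lower-barrier step), whereas you handle both simultaneously via a single $t^{*}$; your version also spells out explicitly why the argument of each $f_i$ lands in $[\overline{m^{*}},\overline{\lambda}\gamma_2]$, a point the paper leaves implicit.
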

\begin{proof}
Firstly, let us proof that $x(t) < \gamma_2\, ,\ \, t \in [t_0, \zeta(\phi))$. If it were not true, then there must be $t_1 \in [t_0, \zeta(\phi))$ such that
$$ x(t_1)=\gamma_2,\,   {\rm and}\, \, x(t)<\gamma_2\, , \, \, \forall t\in [t_0-\tau,t_1)\, ;$$
then, using \textbf{A5}, we have the following contradiction
\begin{eqnarray*}
0 &\leq & x'(t_1)=-\alpha(t_1)u(t_1)+\sum_{i=1}^n\beta_i(t_1)f_i(\lambda_i(t_1)u(t_1-\tau_i(t_1)))+b(t_1)H(u(t_1-\sigma(t_1))\\
&<&-\underline{\alpha} \gamma_2 +\sum_{i=1}^n
\overline{\beta_i}\, \overline{f_i}+ \overline{b}\, \overline{H} \\
&<& 0\, .
\end{eqnarray*}
Similarly, suppose that the inequality $\gamma_1 < x(t) ,\ \, t \in [t_0, \zeta(\phi))\, $ does not holds; then,  there exists $t_2 \in [t_0, \zeta(\phi))$ such that
$$ x(t_2)=\gamma_1,\,   {\rm and}\, \, x(t)>\gamma_1\, , \, \, \forall t\in [t_0-\tau,t_2)\, ;$$
then, using \textbf{A5}, we have the following contradiction
\begin{eqnarray*}
0 &\geq & x'(t_2)=-\alpha(t_2)u(t_2)+\sum_{i=1}^n\beta_i(t_2)f_i(\lambda_i(t_2)u(t_2-\tau_i(t_2)))+b(t_2)H(u(t_2-\sigma(t_2))\\
&>&  -\overline{\alpha} \gamma_1 +\sum_{i=1}^n\underline{\beta_i}\, 
f_i(\overline{\lambda} {\gamma_2})+\underline{b}\, \underline{H} \\
&>& 0\, .
\end{eqnarray*}
The fact that $\zeta(\phi)=+\infty$, is a consequence of Theorem 3.2 in \cite{smith2011introduction}.
\end{proof}


\begin{thm}\label{thm3ps} Assume {\bf A1}-{\bf A6} hold. If
$$ \overline{b}L_H+\displaystyle\sum_{i=1}^n\overline{\beta_i}\, \overline{\lambda_i}\, \ell_{f_i}<\underline{\alpha} $$
holds, then equation (\ref{eq11-ps}) has a unique compact almost automorphic solution in the set
$$\Omega =\set{u\in KAA(\R, \R^+)\, : \,  \gamma_1\leq u(t)\leq \gamma_2, \, \, \,  \forall\, 
t\in\R}\, .$$
\end{thm}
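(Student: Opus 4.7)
The plan is to recast the search for a compact almost automorphic solution of (\ref{eq11-ps}) as a fixed point problem. Since $\alpha(\cdot)$ is positive and bounded below by $\underline{\alpha}>0$ (A1, A2), the scalar equation $x'(t)=-\alpha(t)x(t)$ admits an exponential dichotomy with projection $P=1$ and Green function $G(t,s)=\exp(-\int_s^t\alpha(\xi)d\xi)$ for $t\geq s$, zero otherwise. The bounded solutions of (\ref{eq11-ps}) are therefore the fixed points of the operator
\begin{equation*}
(\mathcal{T}u)(t):=\int_{-\infty}^{t}e^{-\int_{s}^{t}\alpha(\xi)d\xi}\Big[\sum_{i=1}^n\beta_i(s)f_i(\lambda_i(s)u(s-\tau_i(s)))+b(s)H(u(s-\sigma(s)))\Big]ds,
\end{equation*}
and I will apply the Banach contraction principle to $\mathcal{T}$ on the closed subset $\Omega$ of the Banach space $\mathcal{K}AA(\mathbb{R},\mathbb{R})$.

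First, I would check that $\mathcal{T}(\Omega)\subseteq \Omega$. Regarding the pointwise bounds, for any $u\in\Omega$ the integrand is bounded above by $\sum_i\overline{\beta_i}\,\overline{f_i}+\overline{b}\,\overline{H}$ (using A4 and A3), and from $\int_{-\infty}^{t}e^{-\int_{s}^{t}\alpha(\xi)d\xi}ds\leq 1/\underline{\alpha}$ together with the second inequality in A5, I obtain $\mathcal{T}u(t)<\gamma_2$. For the lower bound, A5 yields $\underline{\lambda}\gamma_1>\overline{m^*}\geq m_i^*$, so $\lambda_i(s)u(s-\tau_i(s))>m_i^*$; hence by the monotonicity part of A4, $f_i(\lambda_i(s)u(s-\tau_i(s)))\geq f_i(\overline{\lambda}\gamma_2)$. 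Combined with $\int_{-\infty}^{t}e^{-\int_{s}^{t}\alpha(\xi)d\xi}ds\geq 1/\overline{\alpha}$ and the first inequality in A5, this gives $\mathcal{T}u(t)>\gamma_1$. To verify $\mathcal{T}u\in\mathcal{K}AA(\mathbb{R},\mathbb{R})$, I argue that for $u\in\mathcal{K}AA$, each $u(t-\tau_i(t))$ and $u(t-\sigma(t))$ is compact almost automorphic by item 5 of Proposition \ref{PropCAA}; then composition with the continuous functions $f_i$ and $H$ (item 3) and multiplication by $\lambda_i,\beta_i,b\in\mathcal{K}AA$ (item 1) yields that the integrand is compact almost automorphic. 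Theorem \ref{InvConv1} applied to the scalar system with coefficient $-\alpha(\cdot)$ then gives $\mathcal{T}u\in\mathcal{K}AA$.

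Second, I would verify the contraction property. For $u,v\in\Omega$, the bound $\lambda_i(s)u(s-\tau_i(s)),\lambda_i(s)v(s-\tau_i(s))\in[\underline{\lambda}\gamma_1,\overline{\lambda}\gamma_2]\subset[\overline{m^*},+\infty)$ activates A6 (the Lipschitz constant $\ell_{f_i}$), so
\begin{equation*}
|f_i(\lambda_i(s)u(s-\tau_i(s)))-f_i(\lambda_i(s)v(s-\tau_i(s)))|\leq \ell_{f_i}\overline{\lambda_i}\|u-v\|_\infty,
\end{equation*}
while A3 gives $|H(u(s-\sigma(s)))-H(v(s-\sigma(s)))|\leq L_H\|u-v\|_\infty$. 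Integrating and using $\int_{-\infty}^{t}e^{-\int_{s}^{t}\alpha(\xi)d\xi}ds\leq 1/\underline{\alpha}$ yields
\begin{equation*}
\|\mathcal{T}u-\mathcal{T}v\|_\infty\leq \frac{\overline{b}L_H+\sum_{i=1}^n\overline{\beta_i}\,\overline{\lambda_i}\,\ell_{f_i}}{\underline{\alpha}}\,\|u-v\|_\infty,
\end{equation*}
and the standing hypothesis makes this ratio strictly less than $1$. The Banach fixed point theorem then produces the unique solution in $\Omega$.

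The main technical obstacle is justifying the lower bound $\mathcal{T}u\geq\gamma_1$ and the use of the Lipschitz constants $\ell_{f_i}$: both hinge on the algebraic fact from A5 that $\underline{\lambda}\gamma_1>\overline{m^*}$, which guarantees we remain inside the region where every $f_i$ is non-increasing and Lipschitz. The compact almost automorphic stability under $\mathcal{T}$, while nontrivial, is essentially a direct invocation of Theorem \ref{InvConv1} once one verifies that the nonlinear forcing term inherits compact almost automorphy, which is a routine application of Proposition \ref{PropCAA}.
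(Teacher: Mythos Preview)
Your proposal is correct and follows essentially the same approach as the paper: both recast the problem as a fixed point of the integral operator with kernel $e^{-\int_s^t\alpha(\xi)d\xi}$, verify that this operator maps $\Omega$ into itself (invoking Proposition~\ref{PropCAA} and Theorem~\ref{InvConv1} for the compact almost automorphic part, and the inequalities in A5 for the pointwise bounds), and then check the contraction estimate with constant $\frac{1}{\underline{\alpha}}\bigl(\overline{b}L_H+\sum_i\overline{\beta_i}\,\overline{\lambda_i}\,\ell_{f_i}\bigr)$. Your treatment is in fact slightly more explicit than the paper's in justifying why the arguments $\lambda_i(s)u(s-\tau_i(s))$ fall in $[\overline{m^*},+\infty)$, which is what legitimizes both the monotonicity step for the lower bound and the use of the Lipschitz constants $\ell_{f_i}$ from A6.
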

\begin{proof}
Let us define the operator
\begin{equation}\label{eq15-ps}
({\mathfrak{M}}u)(t):=\int_{-\infty}^te^{-\int_s^t\alpha(\xi)d\xi}(Nu)(s)ds\, ,
\end{equation}
where, 
$N:C(\R)\To C(\R)$ is given by
\begin{equation}\label{eq12-ps}
N(u)(s)= \sum_{i=1}^n\beta_i(s)f_i(\lambda_i(s)u(s-\tau_i(s)))+
b(s)H(u(s-\sigma(s)))\, .
\end{equation}
Now we prove that $\Omega$  is closed, ${\mathfrak{M}}\left( \Omega \right) \subset \Omega$ and ${\mathfrak{M}} $ is a contraction; after that, the conclusion will follow from the Banach fixed point Theorem.

\begin{itemize}
\item $\Omega$ \textbf{is closed}. In fact, let the sequence $\{u_n\} \subset \Omega$ such that $u_n \to u$ uniformly on $\mathbb{R}$. Then, given $\epsilon >0,$ there exist $N_0 \in \mathbb{N}$ such that
$$||u_n-u||_{\infty}<\epsilon\, , \, \, \forall n\geq N_0\, .$$
Therefore, for all $t \in \mathbb{R}$ and $\forall n\geq N_0$, we have
$$|u(t)| \leq ||u_n-u||_{\infty}+|u_n(t)| < \epsilon+\gamma_2\, ,$$
$$\gamma_1 -\epsilon <u_n(t) -\epsilon < u(t)\, ;$$
which means that $u \in \Omega$.

\item ${\mathfrak{M}}\left( \Omega \right) \subset \Omega$. If $u$ is compact almost automorphic, then from the properties in Proposition \ref{PropCAA}, we have that $Nu$ is compact almost automorphic and from Proposition \ref{PropCBiaa} and Theorem \ref{InvConv1}, we have that ${\mathfrak{M}}(u)$ is compact almost automorphic too.  Moreover, if $u \in \Omega$, we have:
$$ \sum_{i=1}^n\underline{\beta_i}\, 
f_i(\overline{\lambda} {\gamma_2})+\underline{b}\, \underline{H}  \leq  \left(  Nu \right) (s) \leq   \sum_{i=1}^n
\overline{\beta_i}\overline{f_i}+ \overline{b}\, \overline{H}  \, ;$$
therefore,
$$\gamma_1< \dfrac{1}{\overline{\alpha}}\pa{ \sum_{i=1}^n\underline{\beta_i}\, 
f_i(\overline{\lambda} {\gamma_2})+\underline{b}\, \underline{H} } \leq \int_{-\infty}^te^{-\int_s^t\alpha(\xi)d\xi} \left(  Nu \right)(s)ds \leq \dfrac{1}{\underline{\alpha}}\pa{ \sum_{i=1}^n
\overline{\beta_i}\overline{f_i}+ \overline{b}\, \overline{H} } <\gamma_2\, . $$

\item ${\mathfrak{M}} $ \textbf{ is a contraction}. This follows from the inequality
$$||{\mathfrak{M}}(u)-{\mathfrak{M}}(v)||_{\infty} \leq \kappa ||u-v||_{\infty} 
\, ,\, \, \forall u,v \in \Omega\, ;$$
where,
$$\kappa:=\dfrac{1}{\underline{\alpha} }\pa{\overline{b}L_H+\displaystyle\sum_{i=1}^n\overline{\beta_i}\, \overline{\lambda_i}\, \ell_{f_i}} <1\, .$$
\end{itemize}
\end{proof}

\bmhead{Acknowledgements}

The authors wish to thank the anonymous reviewers for their helpful comments and suggestions, which have considerably improved the version of this work.


\section*{Declarations}


\begin{itemize}
\item Funding: A. Ch\'avez, N. Aragon\'es, U. Zavaleta and M. Pinto have been partially supported by research grand 038-2021-FONDECYT-Per\'u. A. Ch\'avez and M. Pinto have also been partially supported by CONCYTEC through the PROCIENCIA program under the E041-2023-01

\item Conflict of interest: The authors declare no conflict of interest.
\item Data availability: Not applicable.
\item Author contribution: The results were obtained by A.Ch., N.A., M.P. and U.Z. during many discussions, the authors have participated equally. Typing was done mostly by A.Ch. All authors corrected the manuscript.
\end{itemize}


\begin{thebibliography}{10}

\bibitem{abbas2021pseudo}
{\sc Abbas, S., Dhama, S., Pinto, M., and Sep{\'u}lveda, D.}
\newblock Pseudo compact almost automorphic solutions for a family of delayed
  population model of {N}icholson type.
\newblock {\em Journal of Mathematical Analysis and Applications 495}, 1
  (2021), 124722.

\bibitem{baroun2008almost}
{\sc Baroun, M., Boulite, S., N'guerekata, G.~M., and Maniar, L.}
\newblock Almost automorphy of semilinear parabolic evolution equations.
\newblock {\em Electronic Journal of Differential Equations 2008}, 60 (2008),
  1--9.

\bibitem{baroun2019almost}
{\sc Baroun, M., Ezzinbi, K., Khalil, K., and Maniar, L.}
\newblock Almost automorphic solutions for nonautonomous parabolic evolution
  equations.
\newblock In {\em Semigroup Forum\/} (2019), vol.~99, Springer, pp.~525--567.

\bibitem{ben2020positive}
{\sc Ben~Fredj, H., and Ch{\'e}rif, F.}
\newblock Positive pseudo almost periodic solutions to a class of hematopoiesis
  model: oscillations and dynamics.
\newblock {\em Journal of Applied Mathematics and Computing 63}, 1 (2020),
  479--500.

\bibitem{chavez2022multi}
{\sc Ch{\'a}vez, A., Khalil, K., Kosti{\'c}, M., and Pinto, M.}
\newblock Multi-dimensional almost automorphic type functions and applications.
\newblock {\em Bulletin of the Brazilian Mathematical Society, New Series 53},
  3 (2022), 801--851.

\bibitem{CHAKPPINTO2023}
{\sc Ch{\'a}vez, A., Khalil, K., Pereyra, A., and Pinto, M.}
\newblock Compact almost automorphic solutions to {P}oisson's and heat
  equations.
\newblock {\em Bulletin of the Malaysian Mathematical Sciences Society 47}, 2
  (2024), 1--22.

\bibitem{chavez2021almostaut}
{\sc Ch{\'a}vez, A., Pinto, M., and Zavaleta, U.}
\newblock On almost automorphic type solutions of abstract integral equations,
  a {B}ohr-{N}eugebauer type property and some applications.
\newblock {\em Journal of Mathematical Analysis and Applications 494}, 1
  (2021), 124395.

\bibitem{06}
{\sc Coppel, W.}
\newblock {\em Dichotomies in Stability Theory.}, vol.~629.
\newblock Springer, Berlin, 1978.

\bibitem{coronel2016almost}
{\sc Coronel, A., Maul{\'e}n, C., Pinto, M., and Sep{\'u}lveda, D.}
\newblock Almost automorphic delayed differential equations and
  {L}asota-{W}azewska model.
\newblock {\em Discrete and Continuous Dynamical Systems 37}, 4 (2016),
  1959--1977.

\bibitem{diaganaBook2013almost}
{\sc Diagana, T.}
\newblock {\em Almost {A}utomorphic {T}ype and {A}lmost {P}eriodic {T}ype
  {F}unctions in {A}bstract {S}paces}, vol.~8.
\newblock Springer, 2013.

\bibitem{esSe2022compact}
{\sc Es-sebbar, B., Ezzinbi, K., and Khalil, K.}
\newblock Compact almost automorphic solutions for semilinear parabolic
  evolution equations.
\newblock {\em Applicable Analysis 101}, 7 (2022), 2553--2579.

\bibitem{fink1974almost}
{\sc Fink, A.~M.}
\newblock {\em Almost {P}eriodic {D}ifferential {E}quations}, vol.~377.
\newblock Lecture Notes in Mathematics, Springer, 1974.

\bibitem{hartmanBook2002ordinary}
{\sc Hartman, P.}
\newblock {\em Ordinary {D}ifferential {E}quations}.
\newblock Second Edition, SIAM, 2002.

\bibitem{maniar2003almost}
{\sc Maniar, L., and Schnaubelt, R.}
\newblock Almost periodicity of inhomogeneous parabolic evolution equations.
\newblock In {\em G. Ruiz Goldstein, R.Nagel, S. Romanelli (Eds.): Recent
  Contributions to Evolution Equations}. CRC Press, 2003, pp.~299--318.

\bibitem{16}
{\sc N'Gu\'er\'ekata, G.}
\newblock {\em Almost Periodic and Almost Automorphic Functions in Abstract
  Spaces}.
\newblock Springer Cham, 2021.

\bibitem{pinto2017pseudo}
{\sc Pinto, M., and Vidal, C.}
\newblock Pseudo--almost-periodic solutions for delayed differential equations
  with integrable dichotomies and bi--almost-periodic {G}reen functions.
\newblock {\em Mathematical Methods in the Applied Sciences 40}, 18 (2017),
  6998--7012.

\bibitem{smith2011introduction}
{\sc Smith, H.~L.}
\newblock {\em An {I}ntroduction to {D}elay {D}ifferential {E}quations with
  {A}pplications to the {L}ife {S}ciences}, vol.~57.
\newblock springer New York, 2011.

\bibitem{veech1969complementation}
{\sc Veech, W.~A.}
\newblock Complementation and continuity in spaces of almost automorphic
  functions.
\newblock {\em Mathematica Scandinavica 25}, 1 (1969), 109--112.

\bibitem{19}
{\sc Xiao, T.~J., Zhu, X.~X., and Liang, J.}
\newblock Pseudo-almost automorphic mild solutions to nonautonomous
  differential equations and applications.
\newblock {\em Nonlinear Analysis 70\/} (2009), 4079--4085.

\end{thebibliography}
\end{document}